\renewcommand{\bar}{\overline}
\renewcommand{\hat}{\widehat}
\renewcommand{\tilde}{\widetilde}
\newtheorem{thm}{Theorem}[section]
\newtheorem{lem}[thm]{Lemma}
\theoremstyle{definition}
\newcommand{\scr}[1]{\mathscr #1}
\definecolor{wco}{rgb}{0.5,0.2,0.3}
\numberwithin{equation}{section} \theoremstyle{remark}
\newtheorem{rem}{Remark}[section]
\newcommand{\ua}{\uparrow}
\title{{\bf Large deviations for neutral stochastic functional differential equations}
}
\author{
{\bf  Yongqiang Suo and Chenggui Yuan
 }\\
\footnotesize{Department of Mathematics, Swansea University, Bay Campus, SA1 8EN, UK}\\
}
\begin{document}
\def\A{\mathscr{A}}
\def\G{\mathscr{G}}
\def\eq{\equation}
\def\bg{\begin}
\def\ep{\epsilon}
\def\111{±ßÖµÎÊÌâ(1)--(2)}
\def\x{\|x\|}
\def\y{\|y\|}
\def\xr{\|x\|_r}
\def\xrr{(\sum_{i=1}^T|x_i|^r)^{\frac{1}{r}}}
\def\R{\mathbb R}
\def\ff{\frac}
\def\ss{\sqrt}
\def\B{\mathbf B}
\def\N{\mathbb N}
\def\kk{\kappa} \def\m{{\bf m}}
\def\dd{\delta} \def\DD{\Dd} \def\vv{\varepsilon} \def\rr{\rho}
\def\<{\langle} \def\>{\rangle} \def\GG{\Gamma} \def\gg{\gamma}
  \def\nn{\nabla} \def\pp{\partial} \def\EE{\scr E}
\def\d{\text{\rm{d}}} \def\bb{\beta} \def\aa{\alpha} \def\D{\scr D}
  \def\si{\sigma} \def\ess{\text{\rm{ess}}}\def\lam{\lambda}
\def\beg{\begin} \def\beq{\begin{equation}}  \def\F{\scr F}
\def\Ric{\text{\rm{Ric}}} \def \Hess{\text{\rm{Hess}}}
\def\e{\text{\rm{e}}} \def\ua{\underline a} \def\OO{\Omega}  \def\oo{\omega}
 \def\tt{\tilde} \def\Ric{\text{\rm{Ric}}}
\def\cut{\text{\rm{cut}}} \def\P{\mathbb P} \def\ifn{I_n(f^{\bigotimes n})}
\def\C{\scr C}      \def\alphaa{\mathbf{r}}     \def\r{r}
\def\gap{\text{\rm{gap}}} \def\prr{\pi_{{\bf m},\varrho}}  \def\r{\mathbf r}
\def\Z{\mathbb Z} \def\vrr{\varrho} \def\l{\lambda}
\def\L{\scr L}\def\Tilde{\tilde} \def\TILDE{\tilde}\def\II{\mathbb I}
\def\i{{\rm in}}\def\Sect{{\rm Sect}}\def\E{\mathbb E} \def\H{\mathbb H}
\def\M{\scr M}\def\Q{\mathbb Q} \def\texto{\text{o}} \def\LL{\Lambda}
\def\Rank{{\rm Rank}} \def\B{\scr B} \def\i{{\rm i}} \def\HR{Hat{\R}^d}
\def\to{\rightarrow}\def\l{\ell}\def\ll{\lambda}
\def\8{\infty}\def\ee{\epsilon} \def\Y{\mathbb{Y}} \def\lf{\lfloor}
\def\rf{\rfloor}\def\3{\triangle}\def\H{\mathbb{H}}\def\S{\mathbb{S}}\def\1{\lesssim}
\def\va{\varphi}

\def\R{\mathbb R}  \def\ff{\frac} \def\ss{\sqrt} \def\B{\mathbf
B}
\def\N{\mathbb N} \def\kk{\kappa} \def\m{{\bf m}}
\def\dd{\delta} \def\DD{\Delta} \def\vv{\varepsilon} \def\rr{\rho}
\def\<{\langle} \def\>{\rangle} \def\GG{\Gamma} \def\gg{\gamma}
  \def\nn{\nabla} \def\pp{\partial} \def\EE{\scr E}
\def\d{\text{\rm{d}}} \def\bb{\beta} \def\aa{\alpha} \def\D{\scr D}
  \def\si{\sigma} \def\ess{\text{\rm{ess}}}
\def\beg{\begin} \def\beq{\begin{equation}}  \def\F{\scr F}
\def\Ric{\text{\rm{Ric}}} \def\Hess{\text{\rm{Hess}}}
\def\e{\text{\rm{e}}} \def\ua{\underline a} \def\OO{\Omega}  \def\oo{\omega}
 \def\tt{\tilde} \def\Ric{\text{\rm{Ric}}}
\def\cut{\text{\rm{cut}}} \def\P{\mathbb P} \def\ifn{I_n(f^{\bigotimes n})}
\def\C{\scr C}      \def\aaa{\mathbf{r}}     \def\r{r}
\def\gap{\text{\rm{gap}}} \def\prr{\pi_{{\bf m},\varrho}}  \def\r{\mathbf r}
\def\Z{\mathbb Z} \def\vrr{\varrho} \def\ll{\lambda}
\def\L{\scr L}\def\Tt{\tt} \def\TT{\tt}\def\II{\mathbb I}
\def\i{{\rm in}}\def\Sect{{\rm Sect}}\def\E{\mathbb E} \def\H{\mathbb H}
\def\M{\scr M}\def\Q{\mathbb Q} \def\texto{\text{o}} \def\LL{\Lambda}
\def\Rank{{\rm Rank}} \def\B{\scr B} \def\i{{\rm i}} \def\HR{\hat{\R}^d}
\def\to{\rightarrow}\def\l{\ell}
\def\8{\infty}\def\X{\mathbb{X}}\def\3{\triangle}
\def\V{\mathbb{V}}\def\M{\mathbb{M}}\def\W{\mathbb{W}}\def\Y{\mathbb{Y}}\def\1{\lesssim}

\def\La{\Lambda}\def\S{\mathbf{S}}
\def\va{\varphi}
\def\l{\lambda}
\def\var{\varphi}
\renewcommand{\bar}{\overline}
\renewcommand{\hat}{\widehat}
\renewcommand{\tilde}{\widetilde}

\maketitle
\begin{abstract}
In this paper,  under a one-sided Lipschitz condition on the drift coefficient we adopt (via contraction principle) a exponential
approximation argument to investigate large deviations for  neutral stochastic functional differential
equations. 
\end{abstract}
AMS Subject Classification: 60F05, 60F10, 60H10.

Keywords: large deviations, neutral stochastic functional differential equations

\section{introduction}
As is well known, Large deviation principle (LDP for short) is a branch of probability theory that deals with
the asymptotic behaviour of rare events, and it has a wide range of applications, such as mathematic finance,
statistic mechanics, biology  and so on. So the large deviation principle for SDEs
has been investigated extensively; see, e.g.;\cite{BaoJ, BYG, MZ} and reference therein.

From the literature, we know there are two main methods to investigate the LDPs, one method is based on contraction
principle in LDPs, that is, it relies on approximation arguments and exponential-type probability estimates; see
e.g.,\cite{ BZ, FW, GPP, HMS, LP, LZ, MZ, RZ} and references therein. 
\cite{FW,  LZ, RZ} concerned about the LDP for SDEs driven by Brownian motion
or Poisson measure, \cite{HMS} investigated how rapid-switching behaviour of solution($X_t^\ep$) affects the small-noise asymptotics of $X_t^\ep$-modulated diffusion processes on the certain interval.
\cite{GPP} investigated the LDP for invariant distributions of memory gradient diffusions.

The other one is weak convergence method, which  has also been applied in establishing LDPs for a
various stochastic dynamic systems; see e.g.,\cite{BaoJ, BYG, BAC, BDP, APA, Bud}. According to the compactness argument in this method of the solution space of corresponding skeleton equation, the weak convergence is done for Borel measurable functions
whose existence is based on Yamada-Watanabe theorem. In \cite{BAC, BDP, Bud}, the authors study a large deviation principle for SDEs/SPDEs.

Compared with the weak convergence method, there are few literature about the LDP for SFDEs, \cite{MZ} gave result about large
deviations for SDEs with point delay,  and large deviations for perturbed reflected diffusion processes was investigated in \cite{BZ}.
The aim of this paper is to study the LDP for NSFDEs, which extends the result in \cite{MZ}.

The structure of this paper is as follows.
In section 2, we introduce some preliminary results and notation. In section 3, we state the main result about LDP for NSFDEs and give its proof. 

Before giving the preliminaries, a few words about the notation are in order. Throughout this paper, $C>0$ stipulates a generic constant, which might change from line to line and depend on the time parameters.

\section{Preliminaries}
Let $(\mathbb{R}^d,\langle\cdot,\cdot\rangle,|\cdot|)$ be the $d$-dimensional Euclidean space with the inner product
$\langle\cdot,\cdot\rangle$ which induces the norm $|\cdot|$. Let $\mathbb{M}^{d\times d}$ denote the set of all $d\times d$
matrices, which equipped with the Hilbert-Schimidt norm $\|\cdot\|_{HS}$. $A^*$ stands for the transpose of the matrix $A$.
For a sub-interval $\mathbb{U}\subseteq\mathbb{R}$, $C(\mathbb{U};\mathbb{R}^d)$ be the family of all continuous functions
$f:\mathbb{U}\rightarrow\mathbb{R}^d$. Let $\tau>0$ be a fixed number and $\mathscr{C}=C([-\tau,0];\mathbb{R}^d)$,  endowed
with the uniform norm $\|f\|_\8:=\sup_{-\tau\le\theta\le0}|f(\theta)|$. 
Fixed $t\ge0$, let $f_t\in\mathscr{C}$ be defined by $f_t(\theta)=f(t+\theta), \theta\in[-\tau,0]$.
 In terminology, $(f_t)_{t\ge0}$ is called the segment (or window) process corresponding to $(f(t))_{t\ge-\tau}$.

In this paper, we are interested in the following neutral stochastic functional differential equation (NSFDE)
\begin{equation}\label{eq1.1}
\d\{X^\ep(t)-G(X_t^\ep)\}=b(X_t^\ep)\d t+\ss\ep\sigma(X_t^\ep)\d W(t),~~t\in[0,T],  ~~X_0^\ep=\xi\in\mathscr{C},
\end{equation}
where $G,b:\mathscr{C}\rightarrow\R^d$, $\sigma:\mathscr{C}\rightarrow\R^d\times\R^d$ and $\{W(t)\}_{t\ge0}$ is a
$d$-dimensional Brownian motion on some filtered probability space $(\Omega,\mathscr{F},(\mathscr{F}_t))_{t\ge0},\mathbb{P})$.

The proof of main result (Theorem \ref{th1}) will be based on an extension of the contraction principle
in \cite[Theorem 4.2.23]{DZ}. To make the content self-contained, we recall it as follows:
\begin{lem}\label{lem1}
Let $\{\mu_\ep\}$ be a family of probability measures that satisfies the LDP with a good rate function $I$
on a Hausdorff topological space
$\mathcal{X}$, and for $m=1,2,\cdots,$ let $f_m:\mathcal{X}\rightarrow\mathcal{Y}$ be continuous functions, with
$(\mathcal{Y},d)$ a metric space. Assume there exists a measurable map $f:\mathcal{X}\rightarrow\mathcal{Y}$ such that
for every $\alpha<\8$,
\begin{equation}\label{eq1.2}
\limsup_{m\rightarrow\8}\sup_{\{x:I(x)\le\alpha\}}d(f_m(x),f(x))=0.
\end{equation}
Then any family of probability measures $\{\tilde{\mu}_\ep\}$ for which $\{\mu_\ep\circ f_m^{-1}\}$ are exponentially good approximations
satisfies the LDP in $\mathcal{Y}$ with the good rate function $I'(y)=\inf\{I(x):y=f(x)\}$.
\end{lem}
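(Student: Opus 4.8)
The idea is to reduce the statement to two classical tools from \cite{DZ}, the contraction principle and the transfer of a large deviation principle along exponentially good approximations. First I would apply the contraction principle to each continuous map $f_m:\mathcal{X}\to\mathcal{Y}$: since $\{\mu_\ep\}$ satisfies the LDP with the good rate function $I$, the push-forward family $\{\mu_\ep\circ f_m^{-1}\}$ satisfies the LDP in $(\mathcal{Y},d)$ with good rate function $I_m(y)=\inf\{I(x):f_m(x)=y\}$ (with the convention $\inf\emptyset=\8$). Since the families $\{\mu_\ep\circ f_m^{-1}\}$ are exponentially good approximations of $\{\tilde\mu_\ep\}$, the result on exponentially good approximations (see \cite[Section 4.2]{DZ}) then shows that $\{\tilde\mu_\ep\}$ satisfies at least the \emph{weak} LDP in $\mathcal{Y}$ with rate function \[ \tilde I(y)=\sup_{\delta>0}\liminf_{m\to\8}\inf_{z\in B(y,\delta)}I_m(z), \] where $B(y,\delta)$ denotes the open $d$-ball of radius $\delta$ about $y$. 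It then remains to identify $\tilde I$ with $I'$ and to upgrade this weak LDP to the full LDP.

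For the identification, the bound $\tilde I\le I'$ is the easy half: given $y$ with $I'(y)<\8$, pick $x$ with $f(x)=y$ and put $\alpha=I(x)$; for each $\delta>0$ the uniform convergence \eqref{eq1.2} gives $d(f_m(x),y)<\delta$ for all large $m$, so $\inf_{z\in B(y,\delta)}I_m(z)\le I_m(f_m(x))\le I(x)$, and letting $m\to\8$, then $\delta\downarrow0$, then taking the infimum over all $x$ with $f(x)=y$ yields $\tilde I(y)\le I'(y)$. For the reverse bound I would argue by contradiction: if $\tilde I(y)<\alpha<I'(y)$ for some $\alpha$, then $\liminf_{m\to\8}\inf_{z\in B(y,1/n)}I_m(z)<\alpha$ for every $n$, so one can choose indices $m_1<m_2<\cdots$ and points $x_n$ with $I(x_n)<\alpha$, with $f_{m_n}(x_n)\in B(y,1/n)$, and (again by \eqref{eq1.2}) with $\sup_{\{x:I(x)\le\alpha\}}d(f_{m_n}(x),f(x))<1/n$. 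The points $x_n$ lie in the compact level set $\{I\le\alpha\}$, so a subsequence converges to some $x^*$ with $I(x^*)\le\alpha$. The step I expect to be the crux --- the one that genuinely uses \eqref{eq1.2} rather than mere pointwise convergence, since $f$ is only assumed measurable --- is the observation that \eqref{eq1.2} exhibits the restriction $f|_{\{I\le\alpha\}}$ as a uniform limit of the continuous maps $f_m|_{\{I\le\alpha\}}$, so that $f$ is continuous on the level set $\{I\le\alpha\}$. Then $f(x_n)\to f(x^*)$ along the chosen subsequence, while $d(f(x_n),f_{m_n}(x_n))<1/n$ and $d(f_{m_n}(x_n),y)<1/n$, so the triangle inequality forces $f(x^*)=y$ and hence $I'(y)\le I(x^*)\le\alpha$, a contradiction. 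The same continuity-on-level-sets fact also shows that $I'$ is a good rate function: it is lower semicontinuous, and each level set $\{I'\le\alpha\}$ is closed and contained in the compact set $f(\{I\le\alpha+1\})$.

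To pass from the weak to the full LDP, it then suffices to check the condition of the exponential-approximation theorem, namely that $\inf_{y\in F}I'(y)\le\limsup_{m\to\8}\inf_{y\in F}I_m(y)$ for every closed $F\subseteq\mathcal{Y}$. Were this false there would be $\alpha$ with $\limsup_{m\to\8}\inf_{F}I_m<\alpha<\inf_{F}I'$; then for all large $m$ one picks $y_m\in F$ and $x_m$ with $f_m(x_m)=y_m$ and $I(x_m)<\alpha$, extracts a convergent subsequence $x_{m_k}\to x^*\in\{I\le\alpha\}$, and uses once more the continuity of $f$ on $\{I\le\alpha\}$ together with $\sup_{\{x:I(x)\le\alpha\}}d(f_{m_k},f)\to0$ to get $y_{m_k}\to f(x^*)\in F$ with $I'(f(x^*))\le\alpha$, contradicting $\alpha<\inf_F I'$. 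Since $\tilde I=I'$ is a good rate function, the exponential-approximation theorem then yields the full LDP for $\{\tilde\mu_\ep\}$ in $\mathcal{Y}$ with good rate function $I'(y)=\inf\{I(x):y=f(x)\}$, as asserted.
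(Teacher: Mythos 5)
The paper itself does not prove this lemma; it simply quotes it as an extension of the contraction principle from Dembo--Zeitouni \cite[Theorem 4.2.23]{DZ} ``to make the content self-contained.'' So there is no in-paper proof to compare against, but your reconstruction is essentially the argument given in \cite{DZ}: apply the contraction principle to each continuous $f_m$ to get the LDP for $\mu_\ep\circ f_m^{-1}$ with rate $I_m$, invoke the exponentially-good-approximations theorem (\cite[Theorem 4.2.16]{DZ}) to get a weak LDP for $\tilde\mu_\ep$ with rate $\tilde I$, and then show $\tilde I=I'$ and verify the closed-set condition that upgrades the weak LDP to a full LDP. You have also correctly put your finger on the crux: condition \eqref{eq1.2} is not mere pointwise convergence but uniform convergence on each compact level set $\{I\le\alpha\}$, which makes the restriction of the merely measurable map $f$ to that level set a uniform limit of continuous maps, hence continuous; every step that uses ``$x_n\to x^*$ implies $f(x_n)\to f(x^*)$'' relies on exactly this.

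Two technical points are worth tightening. First, you repeatedly extract convergent \emph{subsequences} from the compact level set $\{I\le\alpha\}$. The lemma is stated for a general Hausdorff topological space $\mathcal X$, and compactness there gives convergent sub\emph{nets}, not subsequences; the argument goes through verbatim with nets, but as written it silently assumes $\mathcal X$ is (sequentially compact, e.g.) metrizable. Second, the claim that $I'$ is lower semicontinuous is asserted but not argued. It does follow from the same circle of ideas: one checks that $\{I'\le\alpha\}=\bigcap_{\epsilon>0}f(\{I\le\alpha+\epsilon\})$, and each $f(\{I\le\alpha+\epsilon\})$ is the continuous image (using the continuity of $f$ on level sets) of a compact set, hence compact and closed in the metric space $\mathcal Y$; an intersection of compacts is compact, giving both closedness and goodness of $I'$ in one stroke. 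With these two repairs the proof is complete and matches the cited source.
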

We now state the classical exponential inequality for stochastic integral, which is crucial
in proving the exponential approximation. For more details, please refer to  Stroock \cite[lemma 4.7]{S}.
\begin{lem}\label{lem2}
Let $\alpha:[0,\8)\times\Omega\rightarrow \mathbb{R}^d\times\mathbb{R}^d$ and $\beta:[0,\8)\times\Omega\rightarrow\mathbb{R}^d$
be $(\mathscr{F}_t)_{t\ge0}$-progressively measurable processes. Assume that $\|\alpha(\cdot)\|_{HS}\le A$ and $|\beta|\le B$.
Set $\xi(t):=\int_0^t\alpha(s)\d W(s)+\int_0^t\beta(s)\d s$ for $t\ge0$. Let $T>0$ and $R>0$ satisfy $d^{\frac{1}{2}}BT<R$.
Then
\begin{equation}\label{eq1.3}
P\Big(\sup_{0\le t\le T}|\xi(t)|\ge R\Big)\le 2d\exp{\Big(\frac{-(R-d^{\frac{1}{2}}BT)^2}{2A^2dT}\Big)}.
\end{equation}
\end{lem}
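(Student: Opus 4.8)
The plan is to follow the classical route for exponential tail bounds of continuous martingales: reduce to scalar coordinates, peel off the bounded drift, and then invoke the exponential supermartingale together with Doob's maximal inequality; the statement is in any case classical (see \cite[lemma 4.7]{S}), so a reference may be cited in lieu of reproducing all details.

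First I would pass to coordinates. If $|\xi(t)|\ge R$ then $|\xi_i(t)|\ge R/\sqrt d$ for at least one index $i$, so
\[
\P\Big(\sup_{0\le t\le T}|\xi(t)|\ge R\Big)\le\sum_{i=1}^d\P\Big(\sup_{0\le t\le T}|\xi_i(t)|\ge R/\sqrt d\Big).
\]
Writing $M_i(t)=\int_0^t\alpha_i(s)\,\d W(s)$ with $\alpha_i$ the $i$-th row of $\alpha$, the bound $|\beta|\le B$ gives $\big|\int_0^t\beta_i(s)\,\d s\big|\le BT$ for all $t\le T$, and since the hypothesis $d^{1/2}BT<R$ forces $BT<R/\sqrt d$, each summand is dominated by $\P(\sup_{t\le T}|M_i(t)|\ge c)$ with $c:=R/\sqrt d-BT>0$. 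Moreover $\langle M_i\rangle_T=\int_0^T|\alpha_i(s)|^2\,\d s\le A^2T$.

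Next, for each fixed $i$ I would use the exponential estimate for a continuous local martingale with bounded quadratic variation. For $\lambda>0$ the process $Z^\lambda_t:=\exp\big(\lambda M_i(t)-\tfrac{\lambda^2}{2}\langle M_i\rangle_t\big)$ is a nonnegative supermartingale with $Z^\lambda_0=1$, and on the event $\{\sup_{t\le T}M_i(t)\ge c\}$, using $\langle M_i\rangle_t\le A^2T$, one has $\sup_{t\le T}Z^\lambda_t\ge\exp\big(\lambda c-\tfrac{\lambda^2}{2}A^2T\big)$; Doob's maximal inequality then gives $\P(\sup_{t\le T}M_i(t)\ge c)\le\exp\big(-(\lambda c-\tfrac{\lambda^2}{2}A^2T)\big)$, and optimizing at $\lambda=c/(A^2T)$ yields $\exp(-c^2/(2A^2T))$. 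Applying the same to $-M_i$ and adding gives $\P(\sup_{t\le T}|M_i(t)|\ge c)\le 2\exp(-c^2/(2A^2T))$. Summing over $i=1,\dots,d$ and using $c^2=(R/\sqrt d-BT)^2=(R-d^{1/2}BT)^2/d$ reproduces exactly $2d\exp\big(-(R-d^{1/2}BT)^2/(2A^2dT)\big)$.

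The one delicate point is the supermartingale step: a priori $Z^\lambda$ is only a local martingale, so one should either note that boundedness of $\langle M_i\rangle$ makes $M_i$ a genuine martingale and $Z^\lambda$ a genuine supermartingale, or run the estimate along the localizing stopping times $\tau_n=\inf\{t:|M_i(t)|\ge n\}$ and pass to the limit $n\to\infty$ with Fatou. Everything else — the coordinate split, the separation of the drift, and the one-variable optimization — is routine.
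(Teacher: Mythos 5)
The paper does not actually prove this lemma: it is quoted verbatim from Stroock's monograph (cited as lemma 4.7 of \cite{S}), so there is no in-text argument against which to compare. Your argument is correct and is the standard proof of this estimate, almost certainly the same as the one in the cited reference. The coordinate reduction is sound (if $|\xi_i(t)|<R/\sqrt d$ for every $i$ then $|\xi(t)|^2<R^2$), the drift is absorbed correctly since $\big|\int_0^t\beta_i\,\d s\big|\le BT$ for $t\le T$, the quadratic-variation bound $\langle M_i\rangle_T\le\int_0^T\|\alpha(s)\|_{HS}^2\,\d s\le A^2T$ is right, and the optimization $\lambda=c/(A^2T)$ with $c=R/\sqrt d-BT$ gives exactly $\exp\big(-(R-d^{1/2}BT)^2/(2A^2dT)\big)$; the factors $2$ and $d$ then come from the two-sided bound and the sum over coordinates. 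Your caveat about $Z^\lambda$ being only a local martingale is well taken but slightly heavier than necessary: a nonnegative local martingale with $Z^\lambda_0=1$ is automatically a supermartingale (Fatou along a localizing sequence), so Doob's supermartingale maximal inequality applies directly; alternatively, $\langle M_i\rangle_T\le A^2T$ gives Novikov's condition immediately, so $Z^\lambda$ is in fact a true martingale. Either way, no gap.
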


\section{LDP for NSFDE}

Let $H$ denote the Cameron-Martin space, i.e.
\begin{equation*}
H=\Big\{h(t)=\int_0^t\dot{h}(s)\d s:[0,T]\rightarrow\R^d;\int_0^T|\dot{h}(s)|^2\d s<+\8\Big\},
\end{equation*}
which is an Hilbert space endowed with the inner product as follows:
\begin{equation*}
\langle f,g\rangle_H=\int_0^T \dot{f}(s)\dot{g}(s)\d s.
\end{equation*}
We define
\begin{equation}\label{eqi}
L_T(h)=\begin{cases}\frac{1}{2}\int_0^T|\dot{h}(t)|^2\d t,&\mbox{if}~~h\in H,\\
+\8&\mbox{otherwise}.
\end{cases}
\end{equation}
The well-known Schilder theorem states that the laws $\mu_\ep$ of $\{\ss\ep W(t)\}_{t\in[0,T]}$ satisfies the LDP
on $C([0,T];\R^d)$ with the rate function $L_T(\cdot)$.

To investigate the  LDP for the laws of $\{X^\ep(t)\}_{t\in[-\tau,T]}$,
we give the following assumptions about coefficients. 
\begin{enumerate}
\item[({\bf H1})]
 There exists a constant $L>0$ such that 
 \begin{equation*}
 \begin{split}
&2\langle\xi(0)-\eta(0)+G(\eta)-G(\xi),b(\xi)-b(\eta)\rangle\le L\|\xi-\eta\|_\8^2,
\end{split}
\end{equation*}
and 
\begin{equation*}
\|\sigma(\xi)-\sigma(\eta)\|_{HS}^2\le L\|\xi-\eta\|_\8^2,
 ~~\xi,\eta\in\mathscr{C};
\end{equation*}
\item[(\bf H2)] There exists a constant $\kappa\in(0,1)$ such that
\begin{equation}\label{eqg}
\begin{split}
&|G(\xi)-G(\eta)|\le\kappa\|\xi-\eta\|_\8,\\
&G(0)=0,~~~~~~\xi,\eta\in\mathscr{C}.
\end{split}
\end{equation}
\end{enumerate}

\begin{rem}
The one-sided Lipschitz condition on the drift coefficient  in ({\bf H1}) is different from the global Lipschitz condition 
in  \cite{BaoJ}. Moreover, our method below is different from that of \cite{BaoJ}.
\end{rem}

\begin{rem}\label{r1}
From ({\bf H1}), ({\bf H3}), it is easy to see that
\begin{equation}\label{eqr1}
2\langle\xi(0)-G(\xi),b(\xi)\rangle\le L_2(1+\|\xi\|_\8^2),~~|G(\xi)|^2\le\kappa^2\|\xi\|^2,~\xi\in\mathscr{C}.
\end{equation}
\end{rem}
\begin{rem}\label{reme}
Let $\mu(\d\theta)\in\mathscr{P}([-\tau,0])$ and let 
\begin{equation*}
\begin{split}
&G(\xi)=\alpha_1\int_{-\tau}^0\xi(\theta)\mu(\d\theta),
~~~\sigma(\xi)=\alpha_2\int_{-\tau}^0\xi(\theta)\mu(\d\theta),\\
&b(\xi)=-\alpha_3\xi(0)-\alpha_4\Big(
\xi(0)-\alpha_1\int_{-\tau}^0\xi(\theta)\mu(\d\theta)
\Big)^{1/3}+\alpha_5\int_{-\tau}^0\xi(\theta)\mu(\d\theta),
\end{split}
\end{equation*}
for some constants $\alpha_i,i=1,\cdots,5$ such that
$\alpha_1\le\kappa$, $\Big(\alpha_3(\alpha_1-1)+\alpha_5(1+\alpha_1)\Big)\vee \alpha_2^2\le L$, then the assumptions ({\bf H1}) and  ({\bf H2}) hold true.
In fact, 
by the H\"older inequality, one has
\begin{equation*}
\begin{split}
&|G(\xi)-G(\eta)|^2\le\alpha_1^2\int_{-\tau}^0|\xi(\theta)-\eta(\theta)|^2\mu(\d\theta)\le\alpha_1^2\|\xi-\eta\|_\8^2\int_{-\tau}^0\mu(\d\theta)=\alpha_1^2\|\xi-\eta\|_\8^2,\\
&\mbox{noting that}\\
&-\alpha_4\langle\xi(0)-\eta(0)-(G(\xi)-G(\eta)),(\xi(0)-G(\xi))^{1/3}-(\eta(0)-G(\eta))^{1/3}\rangle\le 0,\\
&\mbox{so}\\
&\langle\xi(0)-\eta(0)-(G(\xi)-G(\eta)),b(\xi)-b(\eta)\rangle\\
&\le-\alpha_3|\xi(0)-\eta(0)|^2+\alpha_3|\xi(0)-\eta(0)||G(\xi)-G(\eta)|\\
&~~+\alpha_5|\xi(0)-\eta(0)|\int_{-\tau}^0|\xi(\theta)-\eta(\theta)|\mu(\d\theta)-\alpha_5|G(\xi)-G(\eta)|\int_{-\tau}^0|\xi(\theta)-\eta(\theta)|\mu(\d\theta)\\
&\le \alpha_3(\alpha_1-1)+\alpha_5(1+\alpha_1)\|\xi-\eta\|_\8^2,\\
&\|\sigma(\xi)-\sigma(\eta)\|_{HS}^2\le\alpha_2^2\int_{-\tau}^0|\xi(\theta)-\eta(\theta)|^2\mu(\d\theta)\le \alpha_2^2\|\xi-\eta\|_\8^2.
\end{split}
\end{equation*}
Therefore, the assumptions hold if the constants $\alpha_i, i=1, \ldots, 5$ satisfy the conditions above.
\end{rem}

Let $F(h)$ be the unique solution of the following deterministic equation:
\begin{equation}\label{eq1.4}
\begin{cases}
&F(h)(t)-G(F_t(h))=F(h)(0)-G(F_0(h))+\int_0^t b\Big(F_s(h)\Big)\d s\\
&~~~~~~~~~~~~~~~~~~~~~~~~~~~~+\int_0^t\sigma\Big(F_s(h)\Big)\dot{h}(s)\d s,~~~t\in[0,T],\\
&F_0(h)=\xi(\theta),~~\theta\in[-\tau,0].
\end{cases}
\end{equation}
Herein, $F_t(h)(\theta)=F(h)(t+\theta)$, $\theta\in[-\tau,0]$.

The main result of this section is stated as follows.
\begin{thm}\label{th1}
Under the assumptions ({\bf H1})-({\bf H2}),
 it holds that $\{\mu_\ep,\ep>0\}$, the law of $X^\ep(\cdot)$ on $C([-\tau,T];\R^d)$,  satisfies the large deviation principle with the rate function below
\begin{equation}\label{eq1.5}
I(f):=\inf\Big\{L_T(h);F(h)=f,h\in H\Big\},~~~f\in C([-\tau,T];\mathbb{R}^d),
\end{equation}
where $L_T(h)$ is defined as in \eqref{eqi}.
That is,
\begin{enumerate}
\item[(i)] for any closed subset $C\subset C([-\tau, T];\mathbb{R}^d)$,
\begin{equation*}
\limsup_{\ep\rightarrow0}\log\mu_\ep(C)\le-\inf_{f\in C}I(f),
\end{equation*}
\item[(ii)] for any open subset $G\subset C([-\tau, T];\mathbb{R}^d)$,
\begin{equation*}
\liminf_{\ep\rightarrow0}\log\mu_\ep(G)\ge-\inf_{f\in G}I(f).
\end{equation*}
\end{enumerate}
\end{thm}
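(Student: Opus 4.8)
The plan is to deduce Theorem \ref{th1} from the extended contraction principle (Lemma \ref{lem1}) applied to Schilder's theorem, exactly in the spirit of the classical Freidlin--Wentzell approach but adapted to the neutral term. First I would fix the base LDP: take $\mathcal{X}=C([0,T];\R^d)$, $\mu_\ep=$ law of $\ss\ep W$, which by Schilder satisfies the LDP with good rate function $L_T$. Next I would introduce, for each $m\ge1$, a polygonal (piecewise-linear) approximation map $\pi_m:\mathcal{X}\to\mathcal{X}$ that interpolates a path at the mesh points $kT/m$, so that $\pi_m(g)$ is absolutely continuous with $\dot{(\pi_m g)}$ piecewise constant, and set $f_m:=F\circ\pi_m$, where $F$ is the Itô-type solution map from \eqref{eq1.4} (well-defined pathwise on absolutely continuous inputs since then the ``$\d h$'' integral is an ordinary Lebesgue integral). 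The candidate limit map is $f:=F$ in the sense that $f(g)$ solves \eqref{eq1.1} with $\ss\ep W$ replaced by $g$; on $H$ this is literally $F(h)$, and the rate function identity \eqref{eq1.5} is then automatic from Lemma \ref{lem1}'s conclusion $I'(y)=\inf\{L_T(h):y=f(h)\}$ once we check that $f$ restricted to the effective domain coincides with $F$.

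The two things that must be verified to invoke Lemma \ref{lem1} are: (a) the uniform convergence \eqref{eq1.2}, i.e. $\sup_{\{h:L_T(h)\le\alpha\}}\|F(\pi_m h)-F(h)\|_{\infty,[-\tau,T]}\to0$ as $m\to\8$ for each $\alpha<\8$; and (b) that $\{\mu_\ep\circ f_m^{-1}\}$ are exponentially good approximations of $\{\tilde\mu_\ep\}=\{$law of $X^\ep\}$, meaning $\limsup_{m}\limsup_{\ep\to0}\ep\log\P(\|F(\pi_m(\ss\ep W))-X^\ep\|_{\infty}>\delta)=-\8$ for every $\delta>0$. For (a), on the sublevel set $\{L_T(h)\le\alpha\}$ the Cameron--Martin norms are bounded by $\sqrt{2\alpha}$, the set is compact in $C([0,T];\R^d)$, and $h\mapsto F(h)$ is continuous there: one writes the equation for the difference $F(h)(t)-F(h')(t)$, uses (H2) to absorb the neutral term ($|G(F_t(h))-G(F_t(h'))|\le\kappa\sup_{s\le t}|F(h)(s)-F(h')(s)|$ with $\kappa<1$), the one-sided Lipschitz bound in (H1) together with an Itô-free (deterministic) energy estimate on $|F(h)(t)-G(F_t(h))-(F(h')(t)-G(F_t(h')))|^2$, the Lipschitz bound on $\sigma$, Cauchy--Schwarz on $\int_0^t(\sigma(F_s(h))-\sigma(F_s(h')))\dot h(s)\,\d s$, and Gronwall; since $\pi_m h\to h$ in $C$ and $\|\dot{(\pi_m h)}\|_{L^2}\le\|\dot h\|_{L^2}$, this gives the uniform-over-sublevel-set convergence. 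I would first record an a priori bound $\sup_{L_T(h)\le\alpha}\|F(h)\|_{\infty}<\8$ (via Remark \ref{r1}, Young's inequality on the $\sigma\dot h$ term, and Gronwall) so that all Lipschitz constants encountered are genuinely uniform.

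Step (b) is where the real work lies and is the main obstacle. Here I would compare $X^\ep$ with $Y^{\ep,m}:=F(\pi_m(\ss\ep W))$: both satisfy neutral equations driven respectively by $\ss\ep W$ and by $\pi_m(\ss\ep W)$, and I want an exponential-in-$1/\ep$ tail bound on their sup-distance. The strategy is to also introduce $Z^\ep:=F(\ss\ep W)$ (the solution driven by the true, non-polygonalized path — note this requires interpreting the stochastic integral, so in practice one keeps $X^\ep$ itself and compares $X^\ep$ to $Y^{\ep,m}$ directly). Writing $\Delta(t)=X^\ep(t)-Y^{\ep,m}(t)$, the neutral structure gives $\Delta(t)-(G(X^\ep_t)-G(Y^{\ep,m}_t))$ as an Itô process; apply Itô's formula to its squared norm, use (H1) to bound the drift cross term by $L\|\Delta\|^2$-type terms, use (H2) with $\kappa<1$ to move the $G$-difference to the left, and handle the martingale part with the exponential martingale (Bernstein-type) inequality, i.e. exactly Lemma \ref{lem2} applied after a localization/stopping-time argument that controls $\|\sigma(X^\ep_s)\|_{HS}$ and $\|\sigma(Y^{\ep,m}_s)\|_{HS}$ on the event where the solutions stay bounded (such boundedness is itself an exponential-tail event by Lemma \ref{lem2} and Remark \ref{r1}, on a possibly enlarged but $\ep$-uniform radius). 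The discrepancy between $\ss\ep W$ and its polygonal approximation enters through a term like $\ss\ep\int_0^t\sigma(Y^{\ep,m}_s)(\dot{(\pi_m(\ss\ep W))}(s))\,\d s-\ss\ep\int_0^t\sigma(X^\ep_s)\,\d W(s)$; after integration by parts on the mesh and using the modulus of continuity of Brownian motion (whose exceedance probabilities are exponentially small, again via Lemma \ref{lem2} or the standard reflection estimate), one shows this contributes $o(1)$ as $m\to\8$ uniformly in small $\ep$ at the exponential scale. Collecting, Gronwall on the expected-exponential functional (or directly on the stopped second moment inside the exponential estimate) yields $\limsup_\ep \ep\log\P(\|\Delta\|_\infty>\delta)\le -c(\delta)\,\rho(m)$ with $\rho(m)\to\8$, which is precisely the exponentially-good-approximation property. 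Once (a) and (b) hold, Lemma \ref{lem1} delivers the LDP for $\{\tilde\mu_\ep\}$ on $C([-\tau,T];\R^d)$ with rate function $I'(f)=\inf\{L_T(h):f=f(h)\}$; identifying $f(h)=F(h)$ on $H$ gives \eqref{eq1.5}, and lower semicontinuity plus compact sublevel sets of $I$ follow from goodness of $L_T$ and continuity of $F$ on sublevel sets, completing the proof of (i) and (ii).
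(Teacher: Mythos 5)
Your overall framework is the right one---apply Lemma \ref{lem1} to Schilder's theorem to reduce the LDP to (a) uniform convergence of approximating maps over sublevel sets and (b) exponentially good approximation of $X^\ep$---and this is exactly the structure of the paper's proof. The crucial difference, and the source of a genuine gap, is your choice of approximation scheme. You take $f_m=F\circ\pi_m$, where $\pi_m$ is the \emph{polygonal interpolation} of the driving path, so $Y^{\ep,m}=F(\pi_m(\ss\ep W))$ is a random ODE driven by the interpolant. But $\pi_m(\ss\ep W)$ on $[kT/m,(k+1)T/m]$ uses the Brownian value at the right endpoint, so $Y^{\ep,m}$ is \emph{not adapted} to $(\F_t)$. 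In your step (b) you want to apply It\^o's formula and the exponential martingale inequality (Lemma \ref{lem2}) to a function of $\Delta(t)=X^\ep(t)-Y^{\ep,m}(t)-(G(X_t^\ep)-G(Y_t^{\ep,m}))$. The stochastic integral that appears, $\int_0^t\langle\phi'(\Delta(s)),\sigma(X_s^\ep)\d W(s)\rangle$, has an anticipating integrand because $\Delta(s)$ contains $Y^{\ep,m}(s)$; it is not a local martingale, and Lemma \ref{lem2} does not apply. The paper sidesteps this entirely by using the \emph{frozen-coefficient Euler scheme}: the approximating solution $X^{\ep,n}$ defined in \eqref{eq1.6} uses $\sigma(\hat X_t^{\ep,n})$ with the segment frozen at the left mesh point, so $X^{\ep,n}=F^n(\ss\ep W)$ stays adapted, the comparison with $X^\ep$ (Lemma \ref{lem3}) uses genuine It\^o integrals and the exponential martingale estimate, and only then is the deterministic convergence $F^n\to F$ established in Lemma \ref{lem4}. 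In short: your $f_m$ composes the \emph{same} map $F$ with a smoothed driver, while the paper changes the \emph{map} to $F^n$ and keeps the driver. The latter is the price one pays to keep the approximation adapted.

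A second gap: you do not handle unbounded $b,\sigma$. Your parenthetical claim that ``such boundedness is itself an exponential-tail event by Lemma \ref{lem2} and Remark \ref{r1}'' is circular, since Lemma \ref{lem2} already presupposes a uniform bound $\|\alpha(\cdot)\|_{HS}\le A$ on the diffusion integrand, which is exactly what is not available when $\sigma$ is unbounded. The paper devotes a separate estimate to this (Lemma \ref{lemr}, proved directly via It\^o's formula with exponent $\lambda=1/\ep$ and Gronwall, not via Lemma \ref{lem2}), and then runs the whole argument for truncated coefficients $b_R,\sigma_R$ (Case~2), gluing the two with Lemmas \ref{lema} and \ref{leml}. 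That truncation step is essential under (H1)--(H2) and is missing from your plan. To make your proposal work you would either need to (i) replace the polygonal interpolation by an adapted, left-point-frozen scheme as in \eqref{eq1.6}, or (ii) supply anticipating-calculus estimates in place of Lemma \ref{lem2}, and in either case add the unbounded-coefficient truncation and a tightness lemma analogous to Lemma \ref{lemr}.
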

Before giving the proof of Theorem \ref{th1}, we prepare some lemmas.

 We construct $X^{\ep,n}(\cdot)$ by exploiting an approximate  scheme, that is, for
 a real positive number $s$, let $[s]=\sup\{k\in \Z:k\le s\}$ be its integer part. For any $n\in N_0$,
  we consider the following NSFDE
\begin{equation}\label{eq1.6}
\d \{X^{\ep,n}(t)-G(X_t^{\ep,n})\}=b(X_t^{\ep, n})\d t+\ss\ep\sigma(\hat{X}_t^{\ep,n})\d W(t),~~t\ge0,~~X_0^{\ep,n}=\xi,
\end{equation}
where, for $t\ge 0$,
\begin{equation*}
\hat{X}_t^{\ep,n}(\theta):=X^{\ep,n}((t+\theta)\wedge t_n), ~~~ t_n:=[nt]/n, n\ge1, \theta\in[-\tau,0].
\end{equation*}
According to \cite[Theorem 2.2, p.204]{M}, \eqref{eq1.6} has a unique solution by solving piece-wisely
with the time length $1/n$.

In the sequel, we consider two cases separately.

{\bf Case 1: }.
We assume that $b,\sigma$ are bounded, i.e.
\begin{enumerate}
\item[({\bf H3})] There exists a constant $M>0$ such that
\begin{equation*}
|b(\xi)|\vee\|\sigma(\xi)\|_{HS}\le M, \forall \xi\in\mathscr{C}.
\end{equation*}
\end{enumerate}
Next, we show that $\{X^{\ep,n},\ep>0\}$ defined by \eqref{eq1.6} approximates to
$\{X^\ep,\ep>0\}$.
\begin{lem}\label{lem3}
Assume ({\bf H1}), ({\bf H2}), and ({\bf H3}) hold,  then for any $\delta>0$, one has
\begin{equation}\label{eqq1}
\lim_{n\rightarrow\8}\limsup_{\ep\rightarrow 0}\ep\log P\Big(\sup_{-\tau\le t\le T}|X^\ep(t)-X^{\ep,n}(t)|>\delta\Big)=-\8.
\end{equation}
\end{lem}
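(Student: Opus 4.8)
The plan is to estimate the difference $Y^{\ep,n}(t) := X^\ep(t) - X^{\ep,n}(t)$ by combining the neutral-term contraction from (\textbf{H2}), the one-sided Lipschitz bound from (\textbf{H1}), and the exponential tail estimate of Lemma~\ref{lem2}. First I would set $Z^{\ep,n}(t) := \{X^\ep(t)-G(X^\ep_t)\} - \{X^{\ep,n}(t)-G(X^{\ep,n}_t)\}$, so that by \eqref{eq1.1} and \eqref{eq1.6},
\begin{equation*}
Z^{\ep,n}(t) = \int_0^t \bigl(b(X^\ep_s)-b(X^{\ep,n}_s)\bigr)\d s + \ss\ep\int_0^t\bigl(\si(X^\ep_s)-\si(\hat X^{\ep,n}_s)\bigr)\d W(s),
\end{equation*}
with $Z^{\ep,n}(0)=0$ and $Z^{\ep,n}(\theta)=0$ for $\theta\in[-\tau,0]$. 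Applying It\^o's formula to $|Z^{\ep,n}(t)|^2$ and using the first inequality in (\textbf{H1}) to control $2\langle Z^{\ep,n}(s), b(X^\ep_s)-b(X^{\ep,n}_s)\rangle \le L\|X^\ep_s - X^{\ep,n}_s\|_\8^2$, together with the second inequality of (\textbf{H1}) for the diffusion bracket (after splitting $\si(X^\ep_s)-\si(\hat X^{\ep,n}_s)$ into $\si(X^\ep_s)-\si(X^{\ep,n}_s)$ plus $\si(X^{\ep,n}_s)-\si(\hat X^{\ep,n}_s)$), would give a Gronwall-type bound for $\E|Z^{\ep,n}(t)|^2$ in terms of $\|X^\ep_s-X^{\ep,n}_s\|_\8^2$ and the ``mesh error'' $\E\|X^{\ep,n}_s-\hat X^{\ep,n}_s\|_\8^2$.

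The next step is to pass from $Z^{\ep,n}$ back to $Y^{\ep,n}$ using (\textbf{H2}): since $Y^{\ep,n}(t) = Z^{\ep,n}(t) + G(X^\ep_t)-G(X^{\ep,n}_t)$, we get $|Y^{\ep,n}(t)| \le |Z^{\ep,n}(t)| + \kk\|X^\ep_t - X^{\ep,n}_t\|_\8 = |Z^{\ep,n}(t)| + \kk\|Y^{\ep,n}_t\|_\8$; taking suprema over $[-\tau,s]$ and using $\kk<1$ lets us absorb the neutral term, i.e. $\sup_{-\tau\le r\le s}|Y^{\ep,n}(r)| \le (1-\kk)^{-1}\sup_{0\le r\le s}|Z^{\ep,n}(r)|$. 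For the probabilistic statement \eqref{eqq1} I would, however, run the whole argument pathwise on the event where certain sup-norms are bounded: the clean route is to first establish, via Lemma~\ref{lem2} and (\textbf{H3}) (which makes $b,\si$ bounded), that $\sup_{-\tau\le t\le T}|X^\ep(t)|$ and $\sup_{-\tau\le t\le T}|X^{\ep,n}(t)|$ are superexponentially close to a fixed compact set — more precisely that for each $R$ large, $\limsup_{\ep\to0}\ep\log P(\sup_t|X^\ep(t)|>R) = -\8$ uniformly in $n$, and similarly for $X^{\ep,n}$ — so that it suffices to prove \eqref{eqq1} with the extra localization. This uses the integral form of the equations, $X^\ep(t) = \xi(0) + G(X^\ep_t)-G(\xi) + \int_0^t b\,\d s + \ss\ep\int_0^t\si\,\d W$, the bound $|G(X^\ep_t)|\le\kk\|X^\ep_t\|_\8$ from \eqref{eqr1}, again absorbing the $\kk$-term, and then Lemma~\ref{lem2} with $A=M$, $B=M$.

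With localization in hand, the mesh error $\|X^{\ep,n}_s - \hat X^{\ep,n}_s\|_\8$ is, by definition of $\hat X^{\ep,n}$, bounded by the oscillation of $X^{\ep,n}$ over intervals of length $\le 2/n$; on the localized event one writes this oscillation from the integral equation as $|\int|b|\,\d s| \le M/n$ plus $\kk$ times the neutral oscillation plus the increment of the stochastic integral $\ss\ep\int\si\,\d W$ over a short interval, and Lemma~\ref{lem2} again shows this last piece is $\le\delta$ off a superexponentially small set. So the plan is: (1) localize both processes onto a compact set superexponentially; (2) on that event, control $\sup_{0\le t\le T}|Z^{\ep,n}(t)|$ via the exponential martingale inequality applied to the process $\xi(t)=Z^{\ep,n}(t)$ whose coefficients are bounded by $\sqrt{L}$ times the already-controlled sup-norms; (3) absorb the $\kk$ neutral contribution to pass to $Y^{\ep,n}$; (4) send $\ep\to0$ then $n\to\8$, the mesh error vanishing. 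The main obstacle is step (2): $Z^{\ep,n}$ does not have deterministic-bounded coefficients (the drift difference and diffusion difference depend on the solutions), so one cannot apply Lemma~\ref{lem2} directly — the fix is to introduce stopping times $\tau_R = \inf\{t: |X^\ep(t)|\vee|X^{\ep,n}(t)| \ge R\}$, run the It\^o/exponential estimate for the stopped process where the coefficients \emph{are} bounded by constants depending on $L,R$, and then glue with the superexponential estimate $P(\tau_R < T)$ from step (1); care is needed because the one-sided Lipschitz bound controls only $\E|Z|^2$, not $\sup_t|Z(t)|$ directly, so one must feed the $L^2$ Gronwall bound into the Stroock inequality correctly, or alternatively combine the pathwise inequality $|Z^{\ep,n}(t)|\le \int_0^t L\|Y^{\ep,n}_s\|_\8\,\d s/(\text{something}) + |\text{martingale part}|$ with Gronwall pathwise on the localized event.
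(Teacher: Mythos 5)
Your overall strategy (absorb the neutral term via the $\kappa<1$ contraction, localize via stopping times, control the mesh error $\|X^{\ep,n}_s-\hat X^{\ep,n}_s\|_\8$ by Lemma~\ref{lem2} using (\textbf{H3}), and then estimate the remaining probability) matches the skeleton of the paper's proof, and your opening decomposition and Lemma~\ref{lem2} step for the mesh error are essentially correct. But the central estimate — bounding the probability that $\sup_t |X^\ep(t)-X^{\ep,n}(t)|$ exceeds $\delta$ once the mesh error is already confined below $\rho$ — is left unresolved, and the two alternatives you float would both fail to give a \emph{superexponential} bound.

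Concretely, an $L^2$ It\^o/Gronwall estimate $\E|Z^{\ep,n}(t)|^2\lesssim \ep$ plus Chebyshev only yields $P(\cdot)\lesssim \ep/\delta^2$, so $\ep\log P\to 0$, not $-\8$. And applying Lemma~\ref{lem2} directly (or after a pathwise Gronwall on a localized event) to the martingale part $\ss\ep\int\langle Z^{\ep,n}(s),(\si(X^\ep_s)-\si(\hat X^{\ep,n}_s))\,\d W(s)\rangle$ cannot work either: the integrand $\si(X^\ep_s)-\si(\hat X^{\ep,n}_s)$ is \emph{not} small in $n$ — after splitting it, the piece $\si(X^\ep_s)-\si(X^{\ep,n}_s)$ is precisely of the size of the quantity you are trying to control, and on a localization set its bound is an $n$-independent constant $A$. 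Lemma~\ref{lem2} then only gives $\ep\log P\le -\delta^2/(2A^2dT)$, a fixed constant rather than a quantity that diverges to $-\8$ as $n\to\8$. Also, the one-sided Lipschitz (\textbf{H1}) only bounds the inner product $\langle Z,b(X^\ep)-b(X^{\ep,n})\rangle$, not $|b(X^\ep)-b(X^{\ep,n})|$ itself, so you cannot feed a pathwise bound on the drift difference into Lemma~\ref{lem2} either — (\textbf{H1}) must be used inside It\^o's formula, not as a coefficient bound.

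What is missing is the exponential-moment device that drives the paper's proof: apply It\^o's formula to $\phi_\lambda(Y^{\ep,n_\rho}(t))=(\rho^2+|Y^{\ep,n_\rho}(t)|^2)^\lambda$ with $\lambda=1/\ep$, use (\textbf{H1}) to control the drift bracket, BDG to absorb the martingale supremum, and Gronwall to obtain $\E\sup_t\phi_\lambda(Y^{\ep,n_\rho}(t))\le 2\rho^{2/\ep}\e^{CT/\ep}$. Then on the event $\{\xi^\ep_{n_\rho}\le T\}$ the quantity $(\rho^2+(1-\kappa)^2\delta^2)^{1/\ep}$ bounds $\sup_t\phi_\lambda$ from below, so by Chebyshev $\ep\log P(\xi^\ep_{n_\rho}\le T)\le \log\bigl(\rho^2/(\rho^2+(1-\kappa)^2\delta^2)\bigr)+CT$, which indeed diverges to $-\8$ as $\rho\to 0$. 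Combining this with your (correct) Lemma~\ref{lem2} estimate on $P(\tau^\ep_{n_\rho}\le T)$ and a two-parameter limit ($\rho\to0$ then $n\to\8$) gives \eqref{eqq1}. Without the $\lambda=1/\ep$ scaling and the auxiliary small parameter $\rho$, the argument as you sketched it does not close.
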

\begin{proof}
For notation brevity, we set $Z^{\ep,n}(t):=X^\ep(t)-X^{\ep,n}(t), t\ge0$ and $Y^{\ep,n}(t):=X^\ep(t)-X^{\ep,n}(t)-(G(X_t^\ep)-G(X_t^{\ep,n})), t\ge0$.
Noting $X^{\ep,n}_0=X^{n}_0=\xi,$ we write $Y^{\ep,n}(t)$ as follows:
\begin{equation*}
Y^{\ep,n}(t)=\int_0^t(b(X_s^\ep)-b(X_s^{\ep,n}))\d s
+\ss\ep\int_0^t(\sigma(X_s^\ep)-\sigma(\hat{X}_s^{\ep,n}))\d W(s).
\end{equation*}
It is easy to see from \eqref{eqg} that
\begin{equation*}
\begin{split}
|Z^{\ep,n}(t)|&\le|Y^{\ep,n}(t)|+|G(X_t^\ep)-G(X_t^{\ep,n})|\\
&\le|Y^{\ep,n}(t)|+\kappa\|X_t^\ep-X_t^{\ep,n}\|_\8,
\end{split}
\end{equation*}
and noting $X_0^\ep=X_0^{\ep,n}=\xi,  \xi\in\mathscr{C}$, it yields that
\begin{equation}\label{eqa}
\sup_{0\le t\le T}|Z^{\ep,n}(t)|\le \frac{1}{1-\kappa}\sup_{0\le t\le T}|Y^{\ep,n}(t)|.
\end{equation}
For $\rho>0$, we define
$\tau_{n_\rho}^\ep
=\inf\{t\ge 0: \|X_t^{\ep,n}-\hat{X}_t^{\ep,n}\|_\8>\rho\}
$,
$Z^{\ep,n_{\rho}}=Z^{\ep,n}(t\wedge\tau_{n_\rho}^\ep)$,
   $\xi_{n_\rho}^\ep=\inf\{t\ge0: |Z^{\ep,n_\rho}(t)|\ge\delta\}$, and compute
 \begin{equation}\label{eq1.7}
 \begin{split}
 P\Big(\sup_{0\le t\le T}|Z^{\ep,n}(t)|>\delta\Big)&= P\Big(\sup_{0\le t\le T}|Z^{\ep,n}(t)|>\delta,\tau_{n_\rho}^\ep\le T\Big)
 +P\Big(\sup_{0\le t\le T}|Z^{\ep,n}(t)|>\delta,\tau_{n_\rho}^\ep> T\Big)\\
 &\le P(\tau_{n_\rho}^\ep\le T)+P\Big(\sup_{0\le t\le T}|Z^{\ep,n}(t)|>\delta,\tau_{n_\rho}^\ep> T\Big)\\
 &\le P(\tau_{n_\rho}^\ep\le T)+P(\xi_{n_\rho}^\ep\le T).
 \end{split}
 \end{equation}
 Observe that
 \begin{equation*}
 \begin{split}
 X_t^{\ep,n}(\theta)-\hat{X}_t^{\ep,n}(\theta)&=X^{\ep,n}(t+\theta)-X^{\ep,n}((t+\theta)\wedge t_n)\\
 &=(X^{\ep,n}(t+\theta)-X^{\ep,n}(t+\theta))I_{\{(t+\theta)<t_n\}}
 +(X^{\ep,n}(t+\theta)-X^{\ep,n}(t_n))I_{\{t_n\le(t+\theta)\}}\\
 &=(X^{\ep,n}(t+\theta)-X^{\ep,n}(t_n))I_{\{t_n\le(t+\theta)\}}\\
 &=G(X_{t+\theta}^{\ep,n})-G(X_{t_n}^{\ep,n})
 +\Big(\int_{t_n}^{t+\theta}b(X_s^{\ep,n})\d s+\int_{t_n}^{t+\theta}\ss\ep\sigma(\hat{X}_s^{\ep,n})\d W(s)\Big).
 \end{split}
 \end{equation*}
 This, together with \eqref{eqg}, yields
 \begin{equation}\label{eq1.8}
 \sup_{0\le t\le T}\|X_t^{\ep,n}-\hat{X}_t^{\ep,n}\|_\8\le\frac{1}{1-\kappa}\sup_{0\le t\le T}\sup_{t_n-t\le\theta\le 0}\Big|\int_{t_n}^{t+\theta}b(X_s^{\ep,n})\d s+\int_{t_n}^{t+\theta}\ss\ep\sigma(\hat{X}_s^{\ep,n})\d W(s)\Big|.
 \end{equation}
  Taking ({\bf H3}) into consideration and utilizing Lemma \ref{lem2}, one gets that
\begin{equation*}
P\Big(\sup_{0\le t\le T}\|X_t^{\ep,n}-\hat{X}_t^{\ep,n}\|_\8\ge\rho\Big)
\le2d\exp\Big(-\frac{(n\rho(1-\kappa)-\ss\d M)^2}{2nM^2(1-\kappa)^2d\ep}\Big),
\end{equation*}
provided that $\frac{\ss dM}{(1-\kappa)n}<\rho$.
Which, together with the definition of stopping time $\tau_{n_\rho}^\ep$, it follows that
\begin{equation}\label{eq1.11}
\lim_{n\rightarrow\8}\limsup_{\ep\rightarrow 0}\ep\log P(\tau_{n_\rho}^\ep\le T)=-\8.
\end{equation}
For $\lambda>0$, let $\phi_\lambda(y)=(\rho^2+|y|^2)^\lambda$, an application of It\^o's formula yields 
\begin{equation}\label{equ}
\phi_\lambda(Y^{\ep,n_\rho}(t))=\rho^{2\lambda}+M^{\ep,n_\rho}(t)+\int_0^{t\wedge\tau_{n_\rho}^\ep}\gamma_\lambda^\ep(s)\d s,
\end{equation}
where $M^{\ep,n_\rho}(t): =2\lambda\int_0^{t\wedge\tau_{n_\rho}^\ep}(\rho^2+|Y^{\ep,n}(s)|^2)^{\lambda-1}
\ss\ep\langle Y^{\ep,n}(s),\sigma(X_s^{\ep,n})-\sigma(\hat{X}_s^{\ep,n})\d W(s)\rangle$ is a martingale. Moreover, 
by ({\bf H1}), we see that
\begin{equation}\label{eq1.9}
\begin{split}
\gamma_\lambda^\ep(s):&=2\lambda(\rho^2+|Y^{\ep,n}(s)|^2)^{\lambda-1}\langle Y^{\ep,n}(s),b(X_s^\ep)-b(X_s^{\ep,n})\rangle\\
&~~+2\lambda(\lambda-1)\ep(\rho^2+|Y^{\ep,n}(s)|^2)^{\lambda-2} |(\sigma(X_s^\ep)-\sigma(\hat{X}_s^{\ep,n}))^*Y^{\ep,n}(s)|^2\\
&~~+\lambda\ep (\rho^2+|Y^{\ep,n}(s)|^2)^{\lambda-1}\|\sigma(X_s^\ep)-\sigma(\hat{X}_s^{\ep,n})\|_{HS}^2\\
&\le2L\lambda(\rho^2+|Y^{\ep,n}(s)|^2)^{\lambda-1}\|Z_s^{\ep,n}\|_\8^2+\lambda(2\lambda-1)\ep(\rho^2+|Y^{\ep,n}(s)|^2)^{\lambda-1} \|(\sigma(X_s^\ep)-\sigma(\hat{X}_s^{\ep,n}))\|_{HS}^2\\
&\le C_1(\rho^2+|Y^{\ep,n}(s)|^2)^{\lambda-1}\|Z_s^{\ep,n}\|_\8^2+C_2(\rho^2+|Y^{\ep,n}(s)|^2)^{\lambda-1} \|X_s^{\ep,n}-\hat{X}_s^{\ep,n}\|_\8^2,
\end{split}
\end{equation}
where $C_1=2L\lambda[(2\lambda-1)\ep+1],~~C_2=2L\lambda\ep(2\lambda-1)$.

Using the Burkholder-Davis-Gundy (BDG for short) inequality, we obtain
\begin{equation}\label{eq1.10}
\begin{split}
&\E\Big(\sup_{0\le t\le T}M^{\ep,n_\rho}(t)\Big)\\
&\le8\ss{2\ep}\lambda\Big(\E\int_0^{T\wedge\tau_{n_\rho}^\ep}(\rho^2+|Y^{\ep,n}(s)|^2)^{2\lambda-2}
|Y^{\ep,n}(s)|^2\|\sigma(X_s^\ep)-\sigma(\hat{X}_s^{\ep,n})\|_{HS}^2\d s\Big)^{\frac{1}{2}}\\
&\le\frac{1}{2}\E\Big(\sup_{0\le t\le T\wedge\tau_{n_\rho}^\ep}(\rho^2+|Y^{\ep,n}(s)|^2)^\lambda\Big)
+64\lambda^2\ep\E\int_0^{T\wedge\tau_{n_\rho}^\ep}(\rho^2+|Y^{\ep,n}(s)|^2)^{\lambda-1}
\|\sigma(X_s^\ep)-\sigma(\hat{X}_s^{\ep,n})\|_{HS}^2\d s\\
&\le\frac{1}{2}\E\Big(\sup_{0\le t\le T\wedge\tau_{n_\rho}^\ep}(\rho^2+|Y^{\ep,n}(s)|^2)^\lambda\Big)
+128L\lambda^2\ep\E\int_0^{T\wedge\tau_{n_\rho}^\ep}(\rho^2+|Y^{\ep,n}(s)|^2)^{\lambda-1}\|Z_s^{\ep,n}\|_\8^2\d s\\
&~~~~+128\lambda^2\ep\E\int_0^{T\wedge\tau_{n_\rho}^\ep}(\rho^2+|Y^{\ep,n}(s)|^2)^{\lambda-1}
\|\sigma(X_s^{\ep,n})-\sigma(\hat{X}_s^{\ep,n})\|_{HS}^2\d s\\
&\le\frac{1}{2}\E\Big(\sup_{0\le t\le T\wedge\tau_{n_\rho}^\ep}(\rho^2+|Y^{\ep,n}(s)|^2)^\lambda\Big)
+128L\lambda^2\ep\E\int_0^{T\wedge\tau_{n_\rho}^\ep}(\rho^2+|Y^{\ep,n}(s)|^2)^{\lambda-1}\|Z_s^{\ep,n}\|_\8^2\d s\\
&~~~~+128L\lambda^2\ep\E\int_0^{T\wedge\tau_{n_\rho}^\ep}(\rho^2+|Y^{\ep,n}(s)|^2)^{\lambda-1}
\|X_s^{\ep,n}-\hat{X}_s^{\ep,n}\|_\8^2\d s.
\end{split}
\end{equation}

Combining \eqref{eq1.9} and \eqref{eq1.10} and reformulating \eqref{equ},  one has
\begin{equation}
\begin{split}
&\E\Big(\sup_{0\le t\le T}\phi_\lambda(Y^{\ep,n_\rho}(t))\Big)\\
&\le2\rho^{2\lambda}+4L\lambda(66\lambda\ep-\ep+1)
\int_0^T\E(\rho^2+|Y^{\ep,n_\rho}(s)|^2)^{\lambda-1}\|Z_s^{\ep,n_\rho}\|_\8^2\d s\\
&~~~~+4L\lambda\ep(68\lambda-1)\int_0^T\E(\rho^2+|Y^{\ep,n_\rho}(s)|^2)^{\lambda-1}
\|X_s^{\ep,n_\rho}-\hat{X}_s^{\ep,n_\rho}\|_\8^2\d s\\
&\le2\rho^{2\lambda}
+4L\lambda(66\lambda\ep-\ep+1)\int_0^T\E\Big(\sup_{0\le u\le s}(\rho^2+|Y^{\ep,n_\rho}(u)|^2)^{\lambda-1}\|Z_u^{\ep,n_\rho}\|_\8^2\Big)\d s\\
&~~~~+4L\lambda\ep(68\lambda-1)\int_0^T\E\Big(\sup_{0\le u\le s}(\rho^2+|Y^{\ep,n_\rho}(u)|^2)^{\lambda-1}
\|X_u^{\ep,n_\rho}-\hat{X}_u^{\ep,n_\rho}\|_\8^2\Big)\d s\\
&\le2\rho^{2\lambda}+(C_3+C_4)\int_0^T\E\Big(\sup_{0\le u\le s}(\rho^2+|Y^{\ep,n_\rho}(u)|^2)^{\lambda}\Big)\d s,
\end{split}
\end{equation}
where $C_3=4\lambda(66\lambda\ep-\ep+1)
\frac{L}{(1-\kappa)^2}$,~~$C_4=4L\lambda\ep(68\lambda-1)$.
In the last step, we utilized the fact that $Y^{\ep,n_\rho}(t)=0, t\in[-\tau,0]$ and \eqref{eqa}.

Choosing $\lambda=\frac{1}{\ep}$ and setting $\Phi^{\ep,n_\rho}(t):=(\rho^2+|Y^{\ep,n_\rho}(t\wedge\xi_{n_\rho}^\ep)|^2)^{1/\ep}$,
 by the Gronwall inequality, we obtain
 \begin{equation*}
 \begin{split}
 \E\Big(\sup_{0\le t\le T}\Phi^{\ep,n_\rho}(t)\Big)
 \le2\rho^{2\lambda}\e^{(C_3+C_4)T}
 \le2\rho^{2/\ep}\e^{C_5T/\ep},
 \end{split}
 \end{equation*}
 where $C_5=L\Big(\frac{268}{(1-\kappa)^2}+272\Big)$.
Noting that
\begin{equation*}
\Phi^{\ep,n_\rho}(t)=(\rho^2+|Y^{\ep,n_\rho}(t)|^2)^{1/\ep}
I_{\{t\le\xi_{n_\rho}^\ep\}}
+(\rho^2+|Y^{\ep,n_\rho}(\xi_{n_\rho}^\ep)|^2)^{1/\ep}I_{\{\xi_{n_\rho^\ep<t}\}},
\end{equation*}
so
\begin{equation*}
(\rho^2+(1-\kappa)^2\delta^2)^{1/\ep}P(\xi_{n_\rho}^\ep\le T)\le \E\Big(\sup_{0\le t\le T}\Phi^{\ep,n_\rho}(t)\Big),
\end{equation*}
then we have
\begin{equation*}
P(\xi_{n_\rho}^\ep\le T)\le\Big(\frac{2^\ep\rho^2}{\rho^2+(1-\kappa)^2\delta^2}\Big)^{1/\ep}\e^{C_5T/\ep}.
\end{equation*}
Thus,
\begin{equation*}
\limsup_{\ep\rightarrow0}\ep\log P(\xi_{n_\rho}^\ep\le T)\le\log\Big(\frac{\rho^2}{\rho^2+(1-\kappa)^2\delta^2}\Big)+C_5T.
\end{equation*}
Finally, given $L>0$, choose $\rho$ sufficiently small such that $\log\Big(\frac{\rho^2}{\rho^2+(1-\kappa)^2\delta^2}\Big)+C_5T\le -2L$.
Next, utilizing \eqref{eq1.11}, choose $N$ such that $\limsup_{\ep\rightarrow 0}\ep\log P(\tau_{n_\rho}^\ep\le T)\le -2L$ for
$n\ge N$. Then, for $n\ge N$ there is an $0<\ep_n<1$ such that $P(\tau_{n_\rho}^\ep\le T)\le\e^{-L/\ep}$ and
$P(\xi_{n_\rho}^\ep\le T)\le\e^{-L/\ep}$ for $0<\ep\le\ep_n$, so \eqref{eq1.7} leads to
\begin{equation*}
P\Big(\sup_{0\le t\le T}|Z^{\ep,n}(t)|\ge\delta\Big)\le 2\e^{-L/\ep},~~0<\ep\le\ep_n.
\end{equation*}
Thus,
\begin{equation*}
\limsup_{\ep\rightarrow 0}\ep\log P\Big(\sup_{0\le t\le T}|Z^{\ep,n}(t)|>\delta\Big)\le -L,~~~~n\ge N.
\end{equation*}
The proof of the lemma is complete.
\end{proof}
For $n\ge1$, define the map $F^n(\cdot):C_0([0,T],\mathbb{R}^d)\rightarrow C_\xi([-\tau,T],\mathbb{R}^d)$ by
\begin{equation*}
\begin{cases}
&F^n(\omega)(t)-G(F_t^n(\omega))=F^n(\omega)(t_n)-G(F_{t_n}(\omega))+\int_{t_n}^tb(F_s^n(\omega))\d s\\
&~~~~~~~~~~~~~~~~+\sigma(\hat{F}_s^n(\omega))(\omega(t)-\omega(t_n)),~~t_n\le t\le t_n+\frac{1}{n},\\
&F^n(\omega)(t)=\xi(t),~~-\tau\le t\le 0,
\end{cases}
\end{equation*}
where $F_s^n(\omega)(\theta)=F^n(\omega)(s+\theta)$ and $\hat{F}_s^n(\omega)(\theta)=\hat{F}^n(\omega)((s+\theta)\wedge s_n)$.

Notice that, $X^{\ep,n}(t)=F^n(\ss\ep W)(t)$, which is a continuous map. Herein, $W$ is a standard Brownian motion.
For $h\in H$, we define
\begin{equation}\label{eq1.12}
\begin{cases}
&F^n(h)(t)-G(F_t^n(h))=F^n(h)(0)-G(F_0^n(h))+\int_0^t b\Big(F_s^n(h)\Big)\d s\\
&~~~~~~~~~~~~~~~~~~~~~~~~~~~~~~~~
+\int_0^t\sigma\Big(\hat{F}_s^n(h)\Big)\dot{h}(s)\d s, t\in[0,T],\\
&F_0^n(h)=\xi\in\mathscr{C} .
\end{cases}
\end{equation}
The next lemma shows that the measurable map $F(h)(\cdot)$ can be approximated well by the continuous maps
$F^n(h)(\cdot)$.
\begin{lem}\label{lem4}
Under the assumptions of Theorem \ref{th1}, we have
\begin{equation}\label{eq1.15}
\lim_{n\rightarrow\8}\sup_{\{h: L_T(h)\le\alpha\}}\sup_{-\tau\le t\le T}\Big|F^n(h)(t)-F(h)(t)\Big|=0,
\end{equation}
where $\alpha<\infty$ is a constant.
\end{lem}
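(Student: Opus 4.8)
The plan is to carry over the exponential--approximation estimate of Lemma~\ref{lem3} to the deterministic level of the skeleton equations \eqref{eq1.4} and \eqref{eq1.12}: the stochastic integral is replaced by the Cameron--Martin drift $\int_0^\cdot\sigma(\cdot)\dot h(s)\,\d s$, and every probabilistic bound is replaced by a Cauchy--Schwarz control of $\dot h$ through $\int_0^T|\dot h(s)|^2\,\d s\le2\alpha$, which holds on $\{h:L_T(h)\le\alpha\}$. Throughout we stay inside Case~1, so $b$ and $\sigma$ are bounded by $M$. Fix $h$ with $L_T(h)\le\alpha$ and set $Z^n(t):=F(h)(t)-F^n(h)(t)$ and $Y^n(t):=Z^n(t)-\big(G(F_t(h))-G(F_t^n(h))\big)$. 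Both skeleton maps start from $\xi$ on $[-\tau,0]$, so $Z^n$ vanishes there, and \eqref{eqg} gives, exactly as in \eqref{eqa},
\begin{equation*}
\sup_{0\le s\le t}|Z^n(s)|\le\frac{1}{1-\kappa}\sup_{0\le s\le t}|Y^n(s)|,\qquad\text{hence}\qquad\|Z_t^n\|_\infty\le\frac{1}{1-\kappa}\sup_{0\le s\le t}|Y^n(s)| .
\end{equation*}
Subtracting \eqref{eq1.4} from \eqref{eq1.12},
\begin{equation*}
Y^n(t)=\int_0^t\big(b(F_s(h))-b(F_s^n(h))\big)\,\d s+\int_0^t\big(\sigma(F_s(h))-\sigma(\hat F_s^n(h))\big)\dot h(s)\,\d s .
\end{equation*}
Then I would bound the discretization gap $e_n(h):=\sup_{0\le s\le T}\|F_s^n(h)-\hat F_s^n(h)\|_\infty$ by the deterministic analogue of the computation yielding \eqref{eq1.8}: using \eqref{eqg} to absorb the increment of $G$ with the factor $(1-\kappa)^{-1}$, then ({\bf H3}) and Cauchy--Schwarz (so $\int_{s_n}^{s+\theta}|\dot h|\,\d u\le n^{-1/2}\sqrt{2\alpha}$ and $|s+\theta-s_n|\le n^{-1}$), one obtains $e_n(h)\le(1-\kappa)^{-1}\big(M/n+M\sqrt{2\alpha/n}\big)$, which tends to $0$ as $n\to\infty$, uniformly in $h$ over the sublevel set.

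Next I would differentiate $|Y^n(t)|^2$, which is legitimate since $F(h)$ and $F^n(h)$ are absolutely continuous in $t$ and $Y^n(0)=0$. For the drift term the crucial algebraic observation is that with $\xi=F_t(h)$ and $\eta=F_t^n(h)$ one has $\xi(0)-\eta(0)+G(\eta)-G(\xi)=Y^n(t)$, so ({\bf H1}) gives $2\langle Y^n(t),b(F_t(h))-b(F_t^n(h))\rangle\le L\|Z_t^n\|_\infty^2$. For the Cameron--Martin term, Cauchy--Schwarz, the Lipschitz bound on $\sigma$ in ({\bf H1}), the triangle inequality $\|\sigma(F_t(h))-\sigma(\hat F_t^n(h))\|_{HS}\le\sqrt L\big(\|Z_t^n\|_\infty+e_n(h)\big)$ and Young's inequality give
\begin{equation*}
\frac{\d}{\d t}|Y^n(t)|^2\le C\big(1+|\dot h(t)|\big)\|Z_t^n\|_\infty^2+C|\dot h(t)|\,|Y^n(t)|^2+C|\dot h(t)|\,e_n(h)^2 .
\end{equation*}
Putting $\Theta^n(t):=\sup_{0\le s\le t}|Y^n(s)|^2$ and using $\|Z_s^n\|_\infty^2\le(1-\kappa)^{-2}\Theta^n(s)$, integration followed by taking the supremum in $t$ yields
\begin{equation*}
\Theta^n(t)\le C\int_0^t\big(1+|\dot h(s)|\big)\Theta^n(s)\,\d s+C\,e_n(h)^2\int_0^T\big(1+|\dot h(s)|\big)\,\d s .
\end{equation*}

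Since $\int_0^T\big(1+|\dot h(s)|\big)\,\d s\le T+\sqrt{2\alpha T}=:K_\alpha$, a bound uniform over $\{h:L_T(h)\le\alpha\}$, Gronwall's inequality for the finite measure $\d\nu(s)=C\big(1+|\dot h(s)|\big)\,\d s$ (for which $\nu([0,T])\le CK_\alpha$) gives $\Theta^n(T)\le CK_\alpha\,e_n(h)^2\,\e^{CK_\alpha}=:C_\alpha\,e_n(h)^2$. Consequently,
\begin{equation*}
\sup_{-\tau\le t\le T}\big|F^n(h)(t)-F(h)(t)\big|=\sup_{0\le t\le T}|Z^n(t)|\le\frac{\sqrt{C_\alpha}}{1-\kappa}\,e_n(h)\longrightarrow0
\end{equation*}
uniformly over $\{h:L_T(h)\le\alpha\}$ by the first step, which is precisely \eqref{eq1.15}.

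The main obstacle is the handling of the square-integrable but unbounded weight $|\dot h(t)|$ in the Gronwall argument: the classical Gronwall lemma with a bounded coefficient is not available, so one must use its version for the finite measure $\d\nu=C\big(1+|\dot h|\big)\,\d s$ and keep every constant expressed through $K_\alpha=T+\sqrt{2\alpha T}$, so that the resulting bound is genuinely uniform on the sublevel set $\{L_T(h)\le\alpha\}$. A secondary point is that the uniform vanishing of the discretization gap $e_n(h)$ relies on the boundedness hypothesis ({\bf H3}) of Case~1 (or, with more work, on a uniform a priori bound for $\|F^n(h)\|_\infty$ under a linear-growth assumption).
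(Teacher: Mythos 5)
Your proof is correct and follows essentially the same route as the paper: bound the discretization gap $\|F^n_t(h)-\hat F^n_t(h)\|_\infty$ uniformly over the sublevel set, subtract the two skeleton equations, apply ({\bf H1}) via the identity $\xi(0)-\eta(0)+G(\eta)-G(\xi)=Y^n(t)$, and run Gronwall with a $\dot h$-weighted kernel whose total mass is controlled through $L_T(h)\le\alpha$. The only cosmetic deviations are that you invoke ({\bf H3}) directly instead of first deriving the a priori bound $M_\alpha$ of \eqref{eq1.13}, and your Young split produces $|\dot h|$ weights rather than the paper's $|\dot h|^2$ weights, both of which give a finite Gronwall exponent on $\{L_T(h)\le\alpha\}$.
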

\begin{proof}
For notation brevity, we set $M^n(t):=F^n(h)(t)-G(F_t^n(h))$,  by  fundamental inequality $(a+b)^2\le[1+\eta](a^2+\frac{b^2}{\eta})$ and ({\bf H2}), we derive
\begin{equation*}
\begin{split}
|F^n(h)(t)|^2&=|F^n(h)(t)-G(F_t^n(h))+G(F_t^n(h))|^2\\
&\le(1+\eta)\Big(\frac{|G(F_t^n(h))|^2}{\eta}+|F^n(h)(t)-G(F_t^n(h))|^p\Big)\\
&\le(1+\eta)\Big(\frac{\kappa^2\|F_t^n(h)\|_\8^2}{\eta}+|F^n(h)(t)-G(F_t^n(h))|^2\Big).
\end{split}
\end{equation*}
Letting $\eta=\frac{\kappa}{1-\kappa}$,  we then have
\begin{equation}\label{eq1}
\begin{split}
\sup_{0\le t\le T}|F^n(h)(t)|^2&\le \frac{\kappa}{1-\kappa}\|\xi\|_\8^2+\frac{1}{(1-\kappa)^2}\sup_{0\le t\le T}|M^n(t)|^2.
\end{split}
\end{equation} 

 On the other hand, it is easy to see that
\begin{equation}\label{eq2}
|M^n(t)|^2\le(1+\kappa)^2\|F_t^n(h)\|_\8^2.
\end{equation}

 By ({\bf H1}), ({\bf H2}), we obtain from \eqref{eq1.12} that 
\begin{equation*}
\begin{split}
&|M^n(t)|^2\le (1+\kappa)^2\|\xi\|_\8^2+\int_0^t
2\langle M^n(s),b(F_s^n(h))+\sigma(\hat{F}_t^n(h))\dot{h}(s)\rangle\d s\\
&\le(1+\kappa)^2\|\xi\|_\8^2+L_2\int_0^t(1+\|F_s^n(h)\|_\8^2)\d s+\int_0^t|M^n(s)|^2\d s+\int_0^t|\sigma(\hat{F}_t^n(h))\dot{h}(s)|^2\d s\\
&\le(1+\kappa)^2\|\xi\|_\8^2+L_2\int_0^t(1+\|F_s^n(h)\|_\8^2)\d s+\int_0^t|M^n(s)|^2\d s\\
&+L_2\int_0^t(1+\|\hat{F}_t^n(h)\|_\8^2)|\dot{h}(s)|^2\d s.
\end{split}
\end{equation*}
Noting that $\|\hat{F}_t^n(h)\|_\8=\sup_{-\tau\le \theta\le 0}{F}^n(h)((t+\theta)\wedge t_n) \le\sup_{-\tau\le \theta\le 0}{F}^n(h)(t+\theta)$,
which together with \eqref{eq1},\eqref{eq2}, yields that
\begin{equation*}
\begin{split}
&\sup_{-\tau\le t\le T}|F^n(h)(t)|^2\\
&\le\|\xi\|_\8^2+\sup_{0\le t\le T}|F^n(h)(t)|^2\\
&\le\|\xi\|_\8^2+\frac{\kappa}{1-\kappa}\|\xi\|_\8^2+
\frac{1}{(1-\kappa)^2}\sup_{0\le t\le T}|M^n(t)|^2\\
&\le \frac{1-\kappa+(1+\kappa)^2}{(1-\kappa)^2}\|\xi\|_\8^2
+\frac{1}{(1-\kappa)^2}\Big[
(L_2+(1+\kappa)^2)\int_0^T\|F_s^n(h)\|_\8^2\d s\\
&~~~~+L_2\int_0^T\|F_s^n(h)\|_\8^2|\dot{h}(s)|^2\d s+L_2\int_0^T|\dot{h}(s)|^2\d s
\Big],
\end{split}
\end{equation*}
by the Gronwall inequality, we get
\begin{equation*}
\begin{split}
\sup_{n\ge 1}\sup_{-\tau\le t\le T}\Big|F^n(h)(t)\Big|^2&\le 
\Big(\frac{1-\kappa+(1+\kappa)^2}{(1-\kappa)^2}\|\xi\|_\8^2
+\frac{2L_2L_T(h)}{(1-\kappa)^2}\Big)
\exp\Big\{\frac{(L_2+(1+\kappa)^2)T+2L_2L_T(h)}{(1-\kappa)^2}\Big\}\\
&\le C_1(1+L_T(h))\exp\{C_2(1+L_T(h))\},
\end{split}
\end{equation*}
where $C_1=\Big(\frac{1-\kappa+(1+\kappa)^2}{(1-\kappa)^2}\|\xi\|_\8^2\Big)\vee\Big(\frac{2L_2}{(1-\kappa)^2}\Big)$, $C_2=\Big(\frac{(L_2+(1+\kappa)^2)T}{(1-\kappa)^2}\Big)\vee\Big(\frac{2L_2}{(1-\kappa)^2}\Big)$.

In particular,
\begin{equation}\label{eq1.13}
M_\alpha=\sup_{h;L_T(h)\le\alpha}\sup_{n\ge 1}\sup_{-\tau\le t\le T}
\Big|F^n(h)(t)\Big|^2\le C_1(1+\alpha)\exp\{C_2(1+\alpha)\}<\8.
\end{equation}
Hence, in the same way as the   argument of \eqref{eq1.8}, we arrive at
\begin{equation}\label{eq1.14}
\begin{split}
 \sup_{0\le t\le T}\|F_t^{n}(h)-\hat{F}_t^{n}(h)\|_\8&\le\frac{1}{1-\kappa}\sup_{0\le t\le T}\sup_{t_n-t\le\theta\le 0}\Big|\int_{t_n}^{t+\theta}b(F_s^{n}(h))\d s+\int_{t_n}^{t+\theta}\sigma(\hat{F}_s^{n}(h))\dot{h}(s)\d s\Big|\\
&\le \frac{1}{1-\kappa}\sup_{0\le t\le T}\Big(\int_{t_n}^t|b(F_s^n(h))|\d s+\int_{t_n}^t|\sigma(\hat{F}_s^n(h))\dot{h}(s)|\d s\Big)\\
&\le C_\alpha M_\alpha\Big(\frac{1}{n}\Big)^{1/2}\rightarrow 0, ~~\mbox{as}~n\rightarrow\8
\end{split}
\end{equation}
uniformly over the set $\{h;L_T(h)\le\alpha\}$.

For notation brevity, we set $D^n(h)(t):=F^n(h)(t)-F(h)(t)-(G(F_t^n(h))-G(F_t(h)))$, similarly, it is easy to see from ({\bf H1}),({\bf H2}) that
\begin{equation}\label{eq3}
\sup_{0\le t\le T}|F^n(h)(t)-F(h)(t)|^2\le\frac{1}{(1-\kappa)^2}\sup_{0\le t\le T}|D^n(h)(t)|^2,
\end{equation}
and
\begin{equation}\label{eq4}
|D^n(h)(t)|^2\le(1+\kappa)^2\|F_t^n(h)-F_t(h)\|_\8^2.
\end{equation}
Using  \eqref{eq1.4} and \eqref{eq1.12}, we deduce
\begin{equation}
\begin{split}
|D^n(h)(t)|^2
&\le\int_0^t2|\langle D^n(h)(s),b(F_s^n(h))-b(F_s(h))\rangle|\d s\\
&~~+\int_0^t2|\langle D^n(h)(s),[\sigma(\hat{F}_s^n(h)-\sigma(F_s^n(h))+\sigma(F_s^n(h))-\sigma(F_s(h))]\dot{h}(s)\rangle|\d s\\
&\le L\int_0^t\| F_s^n(h)-F_s(h)\|_\8^2\d s+\int_0^t|D^n(h)(s)|^2\d s\\
&~~+L\int_0^t |F^n(h)(s)-F(h)(s)|^2|\dot{h}(s)|^2\d s
+L\int_0^t\|\hat{F}_s^n(h))-F_s^n(h))\|_\8^2|\dot{h}(s)|^2\d s,
\end{split}
\end{equation}
which, together with \eqref{eq1.14}, \eqref{eq3} and \eqref{eq4}, yields that
\begin{equation*}
\begin{split}
\sup_{-\tau\le t\le T}|F^n(h)(t)-F(h)(t)|^2&\le
\frac{1}{(1-\kappa)^2} \Big\{(L+(1+\kappa)^2)\int_0^T\| F_s^n(h)-F_s(h)\|_\8^2\d s\\
&~~+L\int_0^T\| F_s^n(h)-F_s(h)\|_\8^2|\dot{h}(s)|^2\d s+2L\alpha C_\alpha M_\alpha\Big(\frac{1}{n}\Big)^{1/4}\Big\},
\end{split}
\end{equation*}
it follows from the Gronwall inequality that,
\begin{equation*}
\sup_{-\tau\le t\le T}|F^n(h)(t)-F(h)(s)|^2\le \frac{2L\alpha C_\alpha M_\alpha\Big(\frac{1}{n}\Big)^{1/4}}{(1-\kappa)^2}\exp\Big\{\frac{(L+(1+\kappa)^2)T+2L\alpha}{(1-\kappa)^2}\Big\}.
\end{equation*}
Hence, the desired assertion is followed by taking $n\rightarrow\8$.
\end{proof}
{\bf Proof of Theorem \ref{th1} in case 1}
\begin{proof}
Notice that $X^{\ep,n}(s)=F^n(\ep^{1/2}W)(s)$, where $W$ is the Brownian motion.
Then by the contraction principle in large deviations theory, we get that the law
of $X^{\ep,n}(s)$ satisfies an LDP. Then Lemma \ref{lem3} states that $X^{\ep,n}(s)$ 
approximates exponentially to $X^\ep(s)$. Furthermore, Lemma \ref{lem4} shows that the extension of contraction principle to measurable maps
$F(h)(\cdot)$ can be approximated well by continuous maps $F^n(h)(\cdot)$, 
i.e. Lemma \ref{lem3}, so the proof of case 1 of Theorem \ref{th1} follows from Lemma \ref{lem1}.
\end{proof}
Next, we consider

{\bf Case 2: $b,\sigma$
are unbounded.}
\begin{lem}\label{lemr}
Under ({\bf H1}), ({\bf H2}), and for $R>0$, we have
\begin{equation}\label{eq1.16}
\lim_{R\rightarrow\8}\limsup_{\ep\rightarrow 0}\ep\log P\Big(\sup_{-\tau\le t\le T}|X^\ep(t)|>R\Big)=-\8.
\end{equation}
\end{lem}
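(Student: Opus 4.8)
The plan is to run the same $\lambda=1/\ep$ energy estimate used in the proof of Lemma~\ref{lem3}, but now comparing $X^\ep$ with $0$ rather than with $X^{\ep,n}$, and using in place of the boundedness ({\bf H3}) the dissipativity bound \eqref{eqr1} of Remark~\ref{r1} together with the linear growth $\|\sigma(\xi)\|_{HS}^2\le 2\|\sigma(0)\|_{HS}^2+2L\|\xi\|_\8^2$, which follows at once from ({\bf H1}). First I would reduce the claim to an estimate on $Y^\ep(t):=X^\ep(t)-G(X_t^\ep)$. Since $X_0^\ep=\xi$ and $|X^\ep(t)|\le|Y^\ep(t)|+\kappa\|X_t^\ep\|_\8$ by \eqref{eqg}, the argument leading to \eqref{eqa} gives, for $t\in[0,T]$,
\begin{equation*}
\sup_{0\le s\le t}|X^\ep(s)|\le\frac{1}{1-\kappa}\Big(\sup_{0\le s\le t}|Y^\ep(s)|+\kappa\|\xi\|_\8\Big),\qquad \|X_t^\ep\|_\8^2\le C\Big(1+\sup_{0\le s\le t}|Y^\ep(s)|^2\Big),
\end{equation*}
with $C=C(\|\xi\|_\8,\kappa)$, and therefore $\sup_{-\tau\le t\le T}|X^\ep(t)|\le\|\xi\|_\8+\frac1{1-\kappa}(\sup_{0\le t\le T}|Y^\ep(t)|+\kappa\|\xi\|_\8)$. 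Hence it is enough to prove $\lim_{R\to\8}\limsup_{\ep\to0}\ep\log P(\sup_{0\le t\le T}|Y^\ep(t)|>R)=-\8$.

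For $\lambda>1$ set $\phi_\lambda(y)=(1+|y|^2)^\lambda$ and $\zeta_N^\ep:=\inf\{t\ge0:|Y^\ep(t)|\ge N\}$ (the localization by $\zeta_N^\ep$ legitimizes It\^o's formula and the BDG inequality, and is removed at the end by letting $N\to\8$). Applying It\^o's formula to $\phi_\lambda(Y^\ep(t\wedge\zeta_N^\ep))$ produces, exactly as in \eqref{equ}--\eqref{eq1.10}, a martingale term $M^\ep$ and a drift $\int_0^{t\wedge\zeta_N^\ep}\gamma_\lambda^\ep(s)\,\d s$. Using $2\langle Y^\ep(s),b(X_s^\ep)\rangle=2\langle X^\ep(s)-G(X_s^\ep),b(X_s^\ep)\rangle\le L_2(1+\|X_s^\ep\|_\8^2)$ from \eqref{eqr1}, the linear growth of $\sigma$, and the second bound of the previous display, one gets
\begin{equation*}
\gamma_\lambda^\ep(s)\le C\lambda(1+\lambda\ep)\,(1+|Y^\ep(s)|^2)^{\lambda-1}\big(1+\|X_s^\ep\|_\8^2\big)\le C\lambda(1+\lambda\ep)\sup_{0\le u\le s}\phi_\lambda(Y^\ep(u)),
\end{equation*}
while the BDG inequality combined with Young's inequality absorbs $\tfrac14\E\sup_{0\le s\le t}\phi_\lambda(Y^\ep(s\wedge\zeta_N^\ep))$ to the left-hand side and leaves a remainder bounded by $C\lambda^2\ep\int_0^t\E\sup_{0\le u\le s}\phi_\lambda(Y^\ep(u\wedge\zeta_N^\ep))\,\d s$. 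Since $\phi_\lambda(Y^\ep(0))=(1+|\xi(0)-G(\xi)|^2)^\lambda$, rearranging, applying Gronwall's inequality in $t\le T$ and then letting $N\to\8$ by Fatou's lemma, I obtain
\begin{equation*}
\E\sup_{0\le t\le T}\big(1+|Y^\ep(t)|^2\big)^\lambda\le\tfrac43\big(1+|\xi(0)-G(\xi)|^2\big)^\lambda\exp\big\{C\lambda(1+\lambda\ep)T\big\}.
\end{equation*}

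Next I would take $\lambda=1/\ep$, so that $\lambda(1+\lambda\ep)=2/\ep$ and the right-hand side equals $\tfrac43\,K^{1/\ep}$ with $K:=(1+|\xi(0)-G(\xi)|^2)\e^{2CT}$ independent of $\ep\in(0,1)$. Chebyshev's inequality then gives
\begin{equation*}
P\Big(\sup_{0\le t\le T}|Y^\ep(t)|>R\Big)\le\frac{\E\sup_{0\le t\le T}(1+|Y^\ep(t)|^2)^{1/\ep}}{(1+R^2)^{1/\ep}}\le\tfrac43\Big(\frac{K}{1+R^2}\Big)^{1/\ep},
\end{equation*}
whence $\limsup_{\ep\to0}\ep\log P(\sup_{0\le t\le T}|Y^\ep(t)|>R)\le\log\big(K/(1+R^2)\big)\to-\8$ as $R\to\8$. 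Feeding $R\mapsto(1-\kappa)(R-\|\xi\|_\8)-\kappa\|\xi\|_\8\to\8$ into the reduction of the first paragraph yields \eqref{eq1.16}.

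As in Lemma~\ref{lem3}, the delicate point will be the bookkeeping of the $\lambda$-dependence of the constants: the choice $\lambda=1/\ep$ is forced by the requirement that $\exp\{C\lambda(1+\lambda\ep)T\}$ have a finite $\ep$-th-power limit, so one must check that the BDG/Young absorption contributes only an $O(\lambda^2\ep)$ coefficient and that the $\zeta_N^\ep$-localization can be removed with bounds uniform in $N$. The only new feature compared with Lemma~\ref{lem3} is that $b,\sigma$ are now merely of linear growth rather than bounded; this is handled precisely by \eqref{eqr1} and by the self-improving bound $\|X_t^\ep\|_\8^2\le C(1+\sup_{s\le t}|Y^\ep(s)|^2)$, which is what closes the Gronwall loop.
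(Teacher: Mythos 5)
Your proposal is essentially the paper's own proof: both reduce via the contraction bound on $G$ to estimating $Y^\ep(t)=X^\ep(t)-G(X_t^\ep)$, both apply It\^o's formula to $(1+|Y^\ep|^2)^\lambda$ using the dissipativity/linear-growth consequences of ({\bf H1})--({\bf H2}) recorded in \eqref{eqr1}, both absorb the martingale via BDG and Young, close the estimate by Gronwall, take $\lambda=1/\ep$, and finish with a Chebyshev bound. The only difference is organizational: you localize with a separate stopping time $\zeta_N^\ep$ (removed by Fatou) before applying Chebyshev at level $R$, whereas the paper localizes directly with $\xi_R^\ep=\inf\{t\ge 0:|X^\ep(t)|>R\}$ so the localization level and the Chebyshev threshold coincide; the two devices play the same role and lead to the same conclusion.
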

\begin{proof}
For notation brevity, we set $Y^\ep(t):=X^\ep(t)-G(X^\ep(t))$, from ({\bf H2}) and fundamental inequality, it yields that 
\begin{equation}\label{eqs}
|Y^\ep(t)|^2\le(1+\kappa)^2\|X_t^\ep\|_\8^2,
\end{equation}
and 
\begin{equation}\label{eqv}
\begin{split}
&\sup_{-\tau\le t\le T}|X^\ep(t)|^2\le\frac{1}{1-\kappa}\|\xi\|_\8^2+\frac{1}{(1-\kappa)^2}\sup_{0\le t\le T}|Y^\ep(t)|^2.
\end{split}
\end{equation}
For $\lambda>0$, applying the It\^o formula, ({\bf H1}),({\bf H2}) and \eqref{eqr1} yield
\begin{equation}\label{eq1.17}
\begin{split}
(1+|Y^\ep(t)|^2)^\lambda&\le(1+(1+\kappa)^2\|\xi\|_\8^2)^\lambda+\lambda\int_0^t(1+|Y^\ep(s)|^2)^{\lambda-1}2\langle Y^\ep(s),b(X_s^\ep)\rangle\d s\\
&~~+2\lambda(\lambda-1)\ep\int_0^t(1+|Y^\ep(s)|^2)^{\lambda-2}|\sigma(X_s^\ep)Y^\ep(s)|^2\d s\\
&~~+\lambda\ep\int_0^t(1+|Y^\ep(s)|^2)^{\lambda-1}\|\sigma(X_s^\ep)\|_{HS}^2\d s+M^{\ep,\lambda}(t)\\
&\le(1+(1+\kappa)^2\|\xi\|_\8^2)^\lambda+M^{\ep,\lambda}(t)\\
&~~+\lambda L_2(1+2\lambda\ep-\ep)\int_0^t(1+|Y^\ep(s)|^2)^{\lambda-1}(1+\|X_s^\ep\|_\8^2)\d s\\
&\le(1+(1+\kappa)^2\|\xi\|_\8^2)^\lambda+M^{\ep,\lambda}(t)\\
&~~+\lambda L_2C_1(1+2\lambda\ep-\ep)\int_0^t\Big(\sup_{0\le u\le s}(1+|Y^\ep(u)|^2)^{\lambda}\Big)\d s,
\end{split}
\end{equation}
where $C_1=(1+\frac{\|\xi\|_\8^2}{(1-\kappa)})\vee(\frac{1}{(1-\kappa)^2})$,
$M^{\ep,\lambda}(t)=2\lambda\ep\int_0^t(1+|Y^\ep(s)|^2)^{\lambda-1}\langle Y^\ep(s),\sigma(X_s^\ep)\d W(s)\rangle$, and in the last step, we used \eqref{eqv}.

Noting that $\|X_s^\ep\|_\8^2\le \|\xi\|_\8^2+\Big(\sup_{0\le u\le s}|X^\ep(u)|^2\Big)$,
by ({\bf H1}), \eqref{eqv} and the BDG inequality, we obtain
\begin{equation}\label{eq1.19}
\begin{split}
&\E\Big(\sup_{0\le t\le T}M^{\ep,\lambda}(t)\Big)\\
&\le8\ss{2\ep}\lambda\bigg(\E\int_0^T(1+|Y^\ep(s)|^2)^{2\lambda-1}\|\sigma(X_s^\ep)\|_{HS}^2\d s\bigg)^{1/2}\\
&\le\frac{1}{2}\E\Big(\sup_{0\le t\le T}(1+|Y^\ep(s)|^2)^\lambda\Big)+64L_2\lambda^2\ep\E\int_0^T(1+|Y^\ep(s)|^2)^{\lambda-1}(1+\|X_s^\ep\|_\8^2)\d s\\
&\le\frac{1}{2}\E\Big(\sup_{0\le t\le T}(1+|Y^\ep(s)|^2)^\lambda\Big)+64L_2\lambda^2C_1\ep\E\int_0^T\Big(\sup_{0\le u\le s}(1+|Y^\ep(u)|^2)^{\lambda}\Big)\d s.
\end{split}
\end{equation}

Substituting \eqref{eq1.19} into \eqref{eq1.17}, and reformulating \eqref{eq1.17}, we arrive at 
\begin{equation}
\begin{split}
&\E\Big(\sup_{0\le t\le T}(1+|Y^\ep(t)|^2)^\lambda\Big)\\
&\le2(1+(1+\kappa)^2\|\xi\|_\8^2)^\lambda
+2L_2C_1\lambda[66\lambda\ep+1-\ep]\int_0^T\E\Big(\sup_{0\le u\le s}(1+|Y^\ep(u)|^2)^\lambda\Big)\d s.
\end{split}
\end{equation}
For $R>0$, we define $\xi_R^\ep=\inf\{t\ge0: |X^\ep(t)|>R\}$, utilising BDG's inequality yields that
\begin{equation*}
\E\Big(\sup_{0\le t\le T}(1+|Y^\ep(t\wedge \xi_R^\ep)|^2)^\lambda\Big)\le
2(1+(1+\kappa)^2\|\xi\|_\8^2)^\lambda\exp\{2L_2C_1\lambda[66\lambda\ep+1-\ep]T\},
\end{equation*}
which implies that
\begin{equation*}
\E\Big\{\Big(\sup_{0\le t\le T}(1+|Y^\ep(t\wedge \xi_R^\ep)|^2)^\lambda\Big)I_{\{\xi_R^\ep\le T\}}\Big\}\le
2(1+(1+\kappa)^2\|\xi\|_\8^2)^\lambda\exp\{2L_2C_1\lambda[66\lambda\ep+1-\ep]T\},
\end{equation*}
\begin{equation*}
\P\Big(\sup_{-\tau\le t\le T}|X^\ep(t)|>R\Big)\le \P(\xi_R^\ep\le T)\le 
\frac{2(1+(1+\kappa)^2\|\xi\|_\8^2)^\lambda\exp\{2L_2C_1\lambda[66\lambda\ep+1-\ep]T\}}
{\Big(1+[R-\frac{\kappa}{1-\kappa}\|\xi\|_\8^2](1-\kappa)^2\Big)^\lambda},
\end{equation*}
choosing $\lambda=\frac{1}{\ep}$ yields that
\begin{equation*}
\begin{split}
\ep\log \P\Big(\sup_{-\tau\le t\le T}|X^\ep(t)|>R\Big)
&\le\log\frac{2(1+(1+\kappa)^2\|\xi\|_\8^2)}
{\Big(1+[R-\frac{\kappa}{1-\kappa}\|\xi\|_\8^2](1-\kappa)^2\Big)}
+\ep2L_2C_1\lambda[66\lambda\ep+1-\ep]T\\
&\le\log\frac{2(1+(1+\kappa)^2\|\xi\|_\8^2)}
{\Big(1+[R-\frac{\kappa}{1-\kappa}\|\xi\|_\8^2](1-\kappa)^2\Big)}
+2L_2C_1(67-\ep)T,
\end{split}
\end{equation*}
\begin{equation*}
\lim_{R\rightarrow\8}\limsup_{\ep\rightarrow0}\ep\log P\Big(\sup_{-\tau\le t\le T}|X^\ep(t)|>R\Big)=-\8.
\end{equation*}
The proof is therefore complete.
\end{proof}

For $R>0$, define $m_R=\sup\{|G(x)|,|b(x)|,\|\sigma(x)\|_{HS}; \|x\|_\8\le R\}$, and $G_i^R=(-m_R-1)\vee G_i\wedge(m_R+1)$, $b_i^R=(-m_R-1)\vee b_i\wedge(m_R+1)$,
$\sigma_{i,j}^R=(-m_R-1)\vee \sigma_{i,j}\wedge(m_R+1)$, $1\le i,j\le d$. Let $G_R=(G_1^R,G_2^R,\cdots,G_d^R)$, $b_R=(b_1^R,b_2^R,\cdots,b_d^R)$
and $\sigma_R=(\sigma_{i,j}^R)_{1\le i,j\le d}$.
Then for $\|x\|_\8\le R$,
 $$G_R(x)=G(x),~~~~b_R(x)=b(x), ~~~~\sigma_R(x)=\sigma(x).$$

 Also, $G_R$, $b_R$ and $\sigma_R$ satisfy the assumptions ({\bf H1}) and ({\bf H2}).
Let $X^{\ep,R}(\cdot)$ be the solution to the NSFDE
\begin{equation*}
\d\{X^{\ep,R}(t)-G(X_t^{\ep,R})\}=b_R(X_t^{\ep, R})\d t+\ss\ep\sigma_R(X_t^{\ep,R})\d W(t), t>0,
\end{equation*}
with the initial datum $X_0^{\ep,R}=\xi(\theta),~~\theta\in[-\tau,0]$.

We recall a Lemma in \cite{DZ}, which is a key point in the proofs of following Lemmas.
\begin{lem}\label{lema}
Let $N$ be a fixed integer. Then, for any $a_\ep^i\ge0$,
\begin{equation}\label{eq}
\limsup_{\ep\rightarrow 0}\ep\log\Big(\sum_{i=1}^Na_\ep^i\Big)=\max_{i=1}^N\limsup_{\ep\rightarrow0}\ep\log a_\ep^i.
\end{equation}
\end{lem}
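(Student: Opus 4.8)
The plan is to prove the statement by induction on $N$. The base case $N=1$ is trivial, since it reduces to the tautology $\limsup_{\ep\to0}\ep\log a_\ep^1=\limsup_{\ep\to0}\ep\log a_\ep^1$. For the inductive step it suffices, by associativity of the sum, to establish the case $N=2$, i.e. for any two families $a_\ep,b_\ep\ge0$,
\[
\limsup_{\ep\to0}\ep\log(a_\ep+b_\ep)=\max\Big\{\limsup_{\ep\to0}\ep\log a_\ep,\ \limsup_{\ep\to0}\ep\log b_\ep\Big\}.
\]

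First I would prove the inequality ``$\ge$''. This is immediate from monotonicity of $\log$: since $a_\ep+b_\ep\ge a_\ep$ and $a_\ep+b_\ep\ge b_\ep$, and $\ep>0$, we have $\ep\log(a_\ep+b_\ep)\ge\ep\log a_\ep$ and likewise with $b_\ep$; taking $\limsup$ on each gives that the left-hand side dominates both terms on the right, hence their maximum. (If some $a_\ep=0$ one reads $\log 0=-\8$ and the inequality is vacuous there; the $\limsup$ is unaffected.)

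For ``$\le$'', the key elementary fact is $a_\ep+b_\ep\le 2\max\{a_\ep,b_\ep\}$, so
\[
\ep\log(a_\ep+b_\ep)\le \ep\log 2+\ep\log\big(\max\{a_\ep,b_\ep\}\big)=\ep\log2+\max\{\ep\log a_\ep,\ \ep\log b_\ep\}.
\]
Now take $\limsup$ as $\ep\to0$: the term $\ep\log 2\to0$, and one uses the general fact that $\limsup(\max\{f(\ep),g(\ep)\})\le\max\{\limsup f,\ \limsup g\}$ — indeed, along any subsequence realizing the left $\limsup$, a further subsequence makes one of the two functions achieve the max infinitely often, and its $\limsup$ is bounded by the corresponding term on the right. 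Combining the two directions yields the $N=2$ identity, and the induction closes.

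I do not expect any genuine obstacle here; the only points requiring a little care are bookkeeping with the convention $\log 0=-\8$ (handled by noting $\limsup$ ignores $-\8$ contributions) and the subsequence argument for $\limsup$ of a maximum, both of which are standard. The whole argument is a short exercise in real analysis and the factor $N$ (or $2$) enters only through the harmless additive constant $\ep\log N\to0$.
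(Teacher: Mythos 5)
Your argument is correct. Note, however, that the paper itself gives no proof of this lemma: it is simply recalled from Dembo--Zeitouni (it is Lemma 1.2.15 there), so there is no in-paper argument to compare against. Your two-sided estimate for $N=2$ is exactly the standard one: the lower bound from monotonicity of $\log$, the upper bound from $a_\ep+b_\ep\le 2\max\{a_\ep,b_\ep\}$ together with $\ep\log 2\to 0$, and the subsequence argument showing $\limsup_\ep\max\{f(\ep),g(\ep)\}\le\max\{\limsup_\ep f,\limsup_\ep g\}$ is sound. The induction then closes cleanly. The only cosmetic remark is that induction is not needed: the same sandwich $\max_i a_\ep^i\le\sum_{i=1}^N a_\ep^i\le N\max_i a_\ep^i$ applied directly, combined with the identity $\limsup_\ep\big(\max_i\ep\log a_\ep^i\big)=\max_i\limsup_\ep\ep\log a_\ep^i$ (proved by the same finite-pigeonhole subsequence argument you use), gives the result for general $N$ in one stroke with the harmless additive term $\ep\log N\to 0$; this is slightly shorter but mathematically equivalent to what you wrote.
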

The lemma below states that $X^{\ep,R}(\cdot)$ is the uniformly exponential approximation of $X^\ep(\cdot)$ on the interval $[-\tau, T]$.
\begin{lem}\label{leml}
Assume ({\bf H1}), ({\bf H2}) hold, then for any $T>0,~~\delta>0$, one has that:
\begin{equation}\label{eq1.20}
\lim_{R\rightarrow\8}\limsup_{\ep\rightarrow 0}\ep\log P
\Big(\sup_{-\tau\le t\le T}|X^\ep(t)-X^{\ep,R}(t)|>\delta\Big)=-\8.
\end{equation}
\end{lem}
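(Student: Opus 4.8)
The plan is to localize the comparison of $X^\ep$ and $X^{\ep,R}$ to the event where $X^\ep$ stays inside the ball of radius $R$, and then invoke Lemma~\ref{lemr} to control the complementary event. First I would define the exit time $\xi_R^\ep=\inf\{t\ge0:|X^\ep(t)|>R\}$ as in the proof of Lemma~\ref{lemr}. On the event $\{\xi_R^\ep>T\}$ we have $\|X_t^\ep\|_\8\le R$ for all $t\in[0,T]$ (using also $\|\xi\|_\8\le R$, which may be assumed since $R\to\8$), hence $G_R(X_t^\ep)=G(X_t^\ep)$, $b_R(X_t^\ep)=b(X_t^\ep)$, $\sigma_R(X_t^\ep)=\sigma(X_t^\ep)$ on that event. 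A uniqueness argument for the stopped equations then gives that $X^\ep(t)=X^{\ep,R}(t)$ for $t\le T$ on $\{\xi_R^\ep>T\}$, so that
\begin{equation*}
P\Big(\sup_{-\tau\le t\le T}|X^\ep(t)-X^{\ep,R}(t)|>\delta\Big)\le P(\xi_R^\ep\le T).
\end{equation*}

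Making the coincidence of the two processes rigorous on the random time interval is the main technical point: strictly speaking $X^\ep$ and $X^{\ep,R}$ solve equations with different coefficients, so one compares them via the stopping time and the fact that $b_R,\sigma_R,G_R$ agree with $b,\sigma,G$ on $\{\|x\|_\8\le R\}$. The clean way is to note $X^{\ep,R}(\cdot\wedge\xi_R^{\ep,R})$ and $X^\ep(\cdot\wedge\xi_R^\ep)$ satisfy the same NSFDE driven by the truncated coefficients up to the exit time, apply pathwise uniqueness for NSFDEs (guaranteed by ({\bf H1}), ({\bf H2}) via a Gronwall/It\^o argument of the type already carried out in Lemma~\ref{lem3}), and conclude $\xi_R^\ep=\xi_R^{\ep,R}$ together with $X^\ep\equiv X^{\ep,R}$ on $[-\tau,T\wedge\xi_R^\ep]$. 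I expect this uniqueness bookkeeping to be the principal obstacle; the rest is soft.

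Once the above inclusion of events is in place, I would take $\ep\log$ on both sides and pass to the limits:
\begin{equation*}
\limsup_{\ep\rightarrow0}\ep\log P\Big(\sup_{-\tau\le t\le T}|X^\ep(t)-X^{\ep,R}(t)|>\delta\Big)\le\limsup_{\ep\rightarrow0}\ep\log P(\xi_R^\ep\le T)=\limsup_{\ep\rightarrow0}\ep\log P\Big(\sup_{-\tau\le t\le T}|X^\ep(t)|>R\Big),
\end{equation*}
and then let $R\to\8$ and apply Lemma~\ref{lemr} to see that the right-hand side tends to $-\8$. This yields \eqref{eq1.20}. If one prefers to avoid an explicit pathwise-uniqueness statement, an alternative is to run the It\^o-with-$(\rho^2+|\cdot|^2)^{1/\ep}$ argument of Lemma~\ref{lem3} directly on $Y^\ep(t):=X^\ep(t)-X^{\ep,R}(t)-(G(X_t^\ep)-G_R(X_t^{\ep,R}))$ stopped at $\xi_R^\ep$, on which interval the difference of the coefficients vanishes identically, so the resulting estimate forces $P(\sup|Z^\ep|>\delta,\ \xi_R^\ep>T)=0$; combined with the bound on $P(\xi_R^\ep\le T)$ from Lemma~\ref{lemr} this gives the claim. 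Either route, the decisive input is Lemma~\ref{lemr}.
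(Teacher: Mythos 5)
Your argument is correct, and it takes a genuinely cleaner route than the paper's. The paper does \emph{not} invoke pathwise uniqueness to identify $X^\ep$ and $X^{\ep,R}$ up to the exit time. Instead, it introduces a second radius $R_1\le R$, defines $\xi_{R_1}^\ep=\inf\{t\ge0:|X^\ep(t)|\ge R_1\}$ and the exit time $\xi_{R,\delta}^\ep$ of the stopped difference $Z^{\ep,R}(\cdot\wedge\xi_{R_1}^\ep)$ from the $\delta$-ball, and decomposes
\begin{equation*}
P\Big(\sup_{-\tau\le t\le T}|Z^{\ep,R}(t)|>\delta\Big)\le P(\xi_{R_1}^\ep\le T)+P(\xi_{R,\delta}^\ep\le T).
\end{equation*}
It then bounds the second term by rerunning the full $(\rho^2+|\cdot|^2)^{1/\ep}$ It\^o/Gronwall computation of Lemma~\ref{lem3}, applied to $Y^{\ep,R}(\cdot\wedge\xi_{R_1}^\ep)$ with coefficients $b_R,\sigma_R$ (valid since $\|X_s^\ep\|_\8\le R$ for $s\le\xi_{R_1}^\ep$), and finally lets $\rho\to0$ before $R_1\to\8$, combining with Lemma~\ref{lemr} via Lemma~\ref{lema}. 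Your observation that the difference is actually \emph{identically zero} up to $\xi_R^\ep\wedge\xi_R^{\ep,R}$ short-circuits this: once pathwise uniqueness for the $b_R,\sigma_R,G_R$ equation is established (which the paper needs in any case to define $X^{\ep,R}$, and which follows by the same Gronwall argument applied with $\lambda=1$), the second term $P(\xi_{R,\delta}^\ep\le T)$ in the paper's decomposition is not merely super-exponentially small but equal to zero, and the whole lemma reduces to Lemma~\ref{lemr}. The one point you should spell out carefully is the localization: the clean statement is that $X^\ep(\cdot\wedge\eta)=X^{\ep,R}(\cdot\wedge\eta)$ a.s.\ for $\eta:=\xi_R^\ep\wedge\xi_R^{\ep,R}$ (on this random interval both processes solve the truncated equation, so the Gronwall estimate gives $\E\sup_{t\le T\wedge\eta}|Y(t)|^2=0$), from which $\xi_R^\ep=\xi_R^{\ep,R}$ and coincidence on $\{\xi_R^\ep>T\}$ follow. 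You flagged exactly this as the main technical point, so you have not missed anything; the paper simply chose not to exploit it and instead pays for a repetition of the Lemma~\ref{lem3} machinery.
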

\begin{proof}
For notation simplicity, we set $Z^{\ep,R}(t):=X^\ep(t)-X^{\ep,R}(t)$ and $Y^{\ep,R}(t):=X^\ep(t)-X^{\ep,R}(t)-(G(X_t^\ep)-G(X_t^{\ep,R}))$.

From ({\bf H2}), it is easy to see that
\begin{equation*}
\sup_{0\le t\le T}|Z^{\ep,R}(t)|\le\sup_{0\le t\le T}\Big(\frac{1}{1-\kappa}|Y^{\ep,R}(t)|\Big).
\end{equation*}
Define $\xi_{R_1}^\ep:=\inf\{t\ge0:|X^\ep(t)|\ge R_1\}$. For any $R\ge R_1$, we have
\begin{equation}\label{eq1.21}
\begin{split}
Y^{\ep,R}(t\wedge\xi_{R_1}^\ep)=\int_0^{t\wedge\xi_{R_1}^\ep}(b_R(X_s^\ep)-b_R(X_s^{\ep,R}))\d s
+\ss\ep\int_0^{t\wedge\xi_{R_1}^\ep}(\sigma_R(X_s^\ep)-\sigma_R(X_s^{\ep,R}))\d W(s).
\end{split}
\end{equation}
Setting $Z_{R_1}^{\ep}(t):=Z^{\ep,R}(t\wedge\xi_{R_1}^\ep)$,
$Y_{R_1}^{\ep}(t):=Y^{\ep,R}(t\wedge\xi_{R_1}^\ep)$
and $\xi_{R,\delta}^\ep:=\inf\{t\ge 0:|Z_{R_1}^{\ep}(t)|\ge\delta\}$.
Then, we have
\begin{equation}\label{eq1.22}
\begin{split}
&P\Big(\sup_{-\tau\le t\le T}|Z^{\ep,R}(t)|>\delta\Big)\\
&=P\Big(\sup_{-\tau\le t\le T}|Z^{\ep,R}(t\wedge\xi_{R_1}^\ep)|>\delta, I_{\{\xi_{R_1}^\ep\ge T\}}\Big)
+P\Big(\sup_{-\tau\le t\le T}|Z^{\ep,R}(t\wedge\xi_{R_1}^\ep)|>\delta, I_{\{\xi_{R_1}^\ep\le T\}}\Big)\\
&\le P(\xi_{R_1}^\ep\le T)+P(\xi_{R,\delta}^\ep\le T)\\
&\le P\Big(\sup_{-\tau\le t\le T}|X^\ep(t)|>R_1\Big)+P(\xi_{R,\delta}^\ep\le T).
\end{split}
\end{equation}
By mimicking the argument in Lemma \ref{lem3} for $t\le T\wedge\xi_{R_1}^\ep$, one gets
\begin{equation*}
\mathbb{E}\Big(\sup_{0\le t\le T}\big(\rho^2+|Y_{R_1}^\ep(t)|^2\big)^{1/\ep}\Big)\le 2\rho^{2/\ep}\e^{CT/\ep}.
\end{equation*}
This implies that
\begin{equation*}
P(\xi_{R,\delta}^\ep\le T)\le\Big(\frac{2^\ep\rho^2}{\rho^2+(1-\kappa)^2\delta^2}\Big)^{1/\ep}\e^{CT/\ep}.
\end{equation*}
Taking Logarithmic function into consideration, we have
\begin{equation*}
\limsup_{\ep\rightarrow 0}\ep\log P(\xi_{R,\delta}^\ep\le T)\le\log\Big(\frac{\rho^2}{\rho^2+(1-\kappa)^2\delta^2}\Big)+CT.
\end{equation*}
This, together with \eqref{eq1.16},\eqref{eq} and \eqref{eq1.22},  implies
\begin{equation*}
\begin{split}
&\lim_{R\rightarrow\8}\limsup_{\ep\rightarrow 0}\ep\log P\Big(\sup_{-\tau\le t\le T}|Z^{\ep,R}(t)|>\delta\Big)\\
&\le\lim_{R\rightarrow\8}\limsup_{\ep\rightarrow 0}\ep\log \Big(P\Big(\sup_{-\tau\le t\le T}|X^\ep(t)|>R_1\Big)+\lim_{R\rightarrow\8}\limsup_{\ep\rightarrow 0}P(\xi_{R,\delta}^\ep\le T)\Big)\\
&\le\limsup_{\ep\rightarrow 0}\ep\log P\Big(\sup_{-\tau\le t\le T}|X^\ep(t)|>R_1\Big)\vee\Big\{\log\Big(\frac{\rho^2}{\rho^2+(1-\kappa)^2\delta^2}\Big)+CT\Big\}.
\end{split}
\end{equation*}
The conclusion follows from letting first $\rho\rightarrow 0$ and then $R_1\rightarrow\8$ by Lemma \ref{lemr}.
\end{proof}
For $h$ with $L_T(h)<\8$, let $F^R(h)$ be the solution of the  equation below
\begin{equation*}
F^R(h)(t)-G(F_t^R(h))=F^R(h)(0)-G(F_0^R(h))+\int_0^tb_R(F_s^R(h))\d s+\int_0^t\sigma_R(F_s^R(h))\dot{h}(s)\d s
\end{equation*}
with the initial datum $F_0^R(h)=\xi(\theta),~~\theta\in[-\tau,0]$.
Define
\begin{equation*}
I_R(f)=\inf\Big\{\frac{1}{2}\int_0^T|\dot{h}(t)|^2\d t;~~F^R(h)=f\Big\},
\end{equation*}
for each $f\in C([-\tau,T];\R^d)$. If $\Big(\sup_{-\tau\le t\le T}|F(h)(t)|\Big)\le R$, then $F(h)=F^R(h)$.
$$I(f)=I_R(f), ~~\mbox{for all}~ f~\mbox{with} ~~\Big(\sup_{-\tau\le t\le T}|f(t)|\Big)\le R.$$
{\bf Proof of Theorem \ref{th1} in case 2}
\begin{proof}
For $R>0$, and a closed subset $C\subset C([-\tau,T];\R^d)$, set $C_R:=C\cap\{f;\|f\|_\8\le R\}$.
$C_R^\delta$ denotes the $\delta$-neighborhood of $C_R$. Denote by $\mu^{\ep,R}$ the law of $X_R^\ep$.
Then we have
\begin{equation*}
\begin{split}
\mu_\ep(C)&=\mu_\ep(C_{R_1})+\mu_\ep\Big(C,\sup_{-\tau\le t\le T}|X^\ep(t)|>R_1\Big)\\
&\le\mu_\ep(C_{R_1})+P\Big(\sup_{-\tau\le t\le T}|X^\ep(t)|>R_1\Big)\\
&\le P\Big(\sup_{-\tau\le t\le T}|X^\ep(t)-X^{\ep,R}(t)|>\delta\Big)
+\mu_\ep^R\Big(C_{R_1}^\delta\Big)
+P\Big(\sup_{-\tau\le t\le T}|X^\ep(t)|>R_1\Big).
\end{split}
\end{equation*}
Taking the large deviation principle for $\{\mu_\ep^R,\ep>0\}$ yields from \ref{lema} that
\begin{equation*}
\begin{split}
&\limsup_{\ep\rightarrow0}\ep\log\mu_\ep(C)\\
&\le\limsup_{\ep\rightarrow0}\ep\log
\Big\{P\Big(\sup_{-\tau\le t\le T}|X^\ep(t)-X^{\ep,R}(t)|>\delta\Big)
+\Big(-\inf_{f\in C_{R_1}^\delta}I_R(f)\Big)\\
&~~+P\Big(\sup_{-\tau\le t\le T}|X^\ep(t)|>R_1\Big)
\Big\}\\
&\le\Big(-\inf_{f\in C_{R_1}^\delta}I_R(f)\Big)
\vee\Big(\limsup_{\ep\rightarrow 0}\ep\log P\big(\sup_{-\tau\le t\le T}|X^\ep(t)|>R_1\big)\Big)\\
&~~~~\vee\Big(\limsup_{\ep\rightarrow0}\ep\log P\big(\sup_{-\tau\le t\le T}|X^\ep(t)-X^{\ep,R}(t)|>\delta\big)\Big).
\end{split}
\end{equation*}
Then we obtain the upper bound (i) in Theorem \ref{th1}, that is
\begin{equation*}
\limsup_{\ep\rightarrow 0}\ep\log\mu_\ep(C)\le -\inf_{f\in C}I(f),
\end{equation*}
by taking first $R\rightarrow\8$, and $\delta\rightarrow0$, then $R_1\rightarrow\8$.
Let $G$ be an open subset of $C([-\tau,T];\R^d)$.  Then for any $\phi_0\in G$, and taking $\delta>0$, we define
$B(\phi_0,\delta)=\{f;\|f-\phi_0\|_\8\le\delta\}\subset G$.
Then using the large deviation principle for $\{\mu_\ep^R;\ep>0\}$, one gets
\begin{equation*}
\begin{split}
-I_R(\phi_0)&\le\liminf_{\ep\rightarrow0}\ep\log\mu_\ep^R\Big(B(\phi_0,\frac{\delta}{2})\Big)\\
&\le\liminf_{\ep\rightarrow0}\ep\log\Big\{ P\Big(\sup_{-\tau\le t\le T}|X^{\ep,R}(t)-\phi_0|\le\frac{\delta}{2},
\sup_{-\tau\le t\le T}|X^\ep(t)-X^{\ep,R}(t)|\le\frac{\delta}{2}\Big)\\
&~~~~~~~~~~~~~~~~~~~~~+ P\Big(\sup_{-\tau\le t\le T}|X^{\ep,R}(t)-\phi_0|\le\frac{\delta}{2},
\sup_{-\tau\le t\le T}|X^\ep(t)-X^{\ep,R}(t)|\ge\frac{\delta}{2}\Big)\Big\}\\
&\le\Big(\liminf_{\ep\rightarrow0}\ep\log\mu_\ep(G)\Big)\vee\Big(\liminf_{\ep\rightarrow0}\ep\log P\big(\sup_{-\tau\le t\le T}|X^\ep(t)-X^{\ep,R}(t)|\ge\frac{\delta}{2}\big)\Big).
\end{split}
\end{equation*}
Noting that $I_R(\phi_0)=I(\phi_0)$ provided that $\|\phi_0\|_\8\le R$. Then we have
\begin{equation*}
-I(\phi_0)\le\liminf_{\ep\rightarrow0}\ep\log\mu_\ep(G), ~\mbox{as}~~R\rightarrow\8.
\end{equation*}
Owing to the arbitrary of $\phi_0$, it follows that
\begin{equation*}
-\inf_{f\in G}I(f)\le\liminf_{\ep\rightarrow0}\ep\log\mu_\ep(G),
\end{equation*}
which is the lower bound (i) in Theorem \ref{th1}, thus, the proof of Theorem \ref{th1} is complete.
\end{proof}

\end{document}